\newcolumntype{C}{>{\centering\arraybackslash}X}
\def\ternary{\,{\scriptstyle\circ}\,}
\def\cubic{\frak{M}^{\frak c}_2}
\newtheorem{theorem}{Theorem}
\newtheorem{definition}{Definition}
\newtheorem{proposition}{Proposition}
\title{Associative ternary algebras and ternary Lie algebras at cube roots of unity}
\author{
    Anti Maria Aader\textsuperscript{1}, Viktor Abramov\textsuperscript{1}, Olga Liivapuu\textsuperscript{2}\\[6pt]
    \textsuperscript{1}\textit{University of Tartu, Tartu, Estonia}\\
    \textsuperscript{2}\textit{University of Life Sciences, Tartu, Estonia}\\[6pt]
    \texttt{\small anti.maria.aader@ut.ee, viktor.abramov@ut.ee, olga.liivapuu@emu.ee}
}
\date{}
\begin{document}

\maketitle

\begin{abstract}
\noindent
\textbf{Abstract:} We propose an approach to extending the concept of a Lie algebra to ternary structures based on $\omega$-symmetry, 
where $\omega$ is a primitive cube root of unity. 
We give a definition of a corresponding structure, called a ternary Lie algebra at cube roots of unity, or a ternary $\omega$-Lie algebra. 
A method for constructing ternary associative algebras has been developed.
For ternary algebras, the notions of the ternary $\omega$-associator and the ternary $\omega$-commutator are introduced. 
It is shown that if a ternary algebra possesses the property of associativity of the first or second kind, 
then the ternary $\omega$-commutator on this algebra determines the structure of a ternary $\omega$-Lie algebra. 
Ternary algebras of cubic matrices with associative ternary multiplication of the second kind are considered. 
The structure of the 8-dimensional ternary $\omega$-Lie algebra of cubic matrices of the second order is studied, 
and all its subalgebras of dimensions 2 and 3 are determined.
\end{abstract}

\noindent\textbf{Keywords:} Lie algebra; ternary algebra; ternary associativity of the first and second kind; cubic matrices; ternary Lie algebra at cube roots of unity; Pauli Exclusion Principle 

\section{Introduction}
Ternary algebras are algebras with a ternary multiplication law, that is, a multiplication operation which requires three elements to form a product.
Ternary algebras have increasingly become a popular subject of study both in mathematics and in theoretical physics. 
One possible reason for such interest in ternary algebras in theoretical physics may be related to the quark model of elementary particle theory. 
According to the quark model, baryons consist of three quarks; quarks possess three types of color charge, and there exist three generations of quarks. 
These facts may be viewed as the Universe’s hint that the appropriate algebraic structures for describing phenomena at the Planck scale are ternary algebras.
It was precisely these features of the quark model that motivated Nambu in his construction of a generalization of Hamiltonian mechanics~\cite{Nambu(1973)}. 
At the core of this generalization lies a ternary analog of the Poisson bracket. 
Filippov proposed and studied a ternary analog of Lie algebras~\cite{Filippov}, which is now known as a {3-Lie algebra}. 
An essential part of the structure of a 3-Lie algebra is the {Filippov–Jacobi identity}, which is an analog of the Jacobi identity in the theory of Lie algebras. 
Later it was discovered that the Nambu ternary bracket satisfies the Filippov–Jacobi identity, that is, the Nambu ternary bracket defines a 3-Lie algebra structure on the algebra of functions. 
In the 1970s, Bars and Günaydin~\cite{Bars-Gunaydin_1,Bars-Gunaydin_2} proposed a theory of fundamental constituents of matter, which they called {ternons}. 
The ternary algebras played a central role in their theory as the algebraic building blocks for Lie algebras and superalgebras. 
This algebraic part of the theory was based on the work of Kantor~\cite{Kantor}.

At the foundation of many mathematical and physical structures lies the important concept of anti-symmetry. 
For instance, differential forms are antisymmetric functions of vector fields, the Lie bracket and the Poisson bracket are antisymmetric bilinear forms
and the product of generators in the Grassmann algebra is antisymmetric with respect to the permutation of two generators. 
In theoretical physics, the wave function of a quantum system is antisymmetric with respect to fermions, which is the algebraic expression of the Pauli exclusion principle. 
Indeed, a quantum system cannot contain two fermions with identical quantum characteristics, since, by the property of anti-symmetry, the corresponding wave function vanishes in that case. 
In the approaches of Filippov and Nambu to generalizing the concept of Lie and Poisson brackets to algebraic structures with $n$-ary multiplication laws, 
the property of anti-symmetry is preserved, that is, the $n$-ary Filippov and Nambu brackets are antisymmetric. 
Consequently, if two arguments coincide, these brackets vanish. 

However, when passing from binary to ternary structures, one encounters a different type of symmetry, 
which may be called the {rotational symmetry of an equilateral triangle}, or {$\omega$-symmetry}, 
where $\omega$ is a primitive cube root of unity. 
This type of symmetry underlies the structures introduced and studied in~\cite{Abramov-Kerner-LeRoy, Abramov-Kerner-Liivapuu-Shitov,AKL-2020-SPMS,Kerner-Lukierski-2021,Kerner-Lukierski-2022}. 
The essence of this symmetry is that a cyclic permutation of the three arguments of a trilinear form multiplies its value by the primitive cube root of unity $\omega$. 
Let $W$ be a complex vector space and let $\theta$ be a trilinear $W$-valued form on a vector space $V$. 
Then the above symmetry can be expressed as $\theta(x,y,z)=\omega\,\theta(y,z,x)$, where $x,y,z\in V$. 
From this property, it follows that if all three arguments of $\theta$ coincide, that is, $x=y=z$, then the form vanishes. 
However—and this is the fundamental difference from anti-symmetry—if two of the three arguments coincide, the form $\theta$ does not necessarily vanish, 
that is, its value may be nonzero. 
Therefore, a quantum system of several particles whose state is described by a wave function possessing $\omega$-symmetry 
permits the coexistence of two particles with identical quantum characteristics, but forbids the coexistence of three such particles. 
This may be regarded as a ternary generalization of the Pauli exclusion principle, whose physical justification is given in~\cite{Kerner(2017)}.

In the present paper, we develop an approach to extending the concept of a Lie algebra to ternary structures, initiated in~\cite{Abramov_2024}. 
This approach is based on a ternary bracket possessing $\omega$-symmetry. 
We introduce the notion of a {ternary Lie algebra at cube roots of unity or ternary $\omega$-Lie algebra}. 
An important part of this structure is the {$GA(1,5)$-identity}, which we regard as an analog of the Jacobi identity in Lie algebras. 
This identity is a sum of terms that can be obtained through permutations belonging to the general affine group $GA(1,5)$. 
It is well known that an associative (binary) algebra becomes a Lie algebra if it is equipped with the commutator of two elements. 
We develop an analogous method for the construction of ternary $\omega$-Lie algebras. 
To this end, we introduce a {ternary $\omega$-commutator} and show that, in the case of a ternary algebra possessing associativity of the first or second kind, 
the ternary $\omega$-commutator satisfies the $GA(1,5)$-identity. 
Along the way, we also introduce the concept of a {ternary $\omega$-associator}, which unifies all conditions of ternary associativity 
into a single algebraic expression under the assumption that the underlying field is the field of complex numbers. 
In the final section, we investigate the structure of the ternary $\omega$-Lie algebra of cubic matrices of the second order and find all its two-dimensional and three-dimensional subalgebras.

\section{Ternary Lie algebra at cube roots of unity}


The study of ternary algebras, that is, vector spaces with a ternary multiplication law, has always attracted both mathematicians and physicists. Since Lie algebras play an exceptionally important role in algebra, geometry, and theoretical physics, the extension of the concept of Lie algebras to structures with ternary multiplication is of particular interest in this context. Several approaches exist in this area. One of them was proposed by Filippov \cite{Filippov}, and the corresponding structure is called an $n$-Lie algebra. In Filippov's approach, a key role is played by the fact that the Jacobi identity is equivalent to the property in which, in the double Lie bracket, the outer bracket is a derivation of the inner bracket. The second approach \cite{Bremner}  uses the concept of ternary associativity, which is closer to that proposed in this paper. It is important to note the distinction between our approach and the aforementioned ones. As is well known, the binary bracket is skew-symmetric, meaning that the interchange of two arguments in the Lie bracket is accompanied by a multiplication by -1. This property is preserved when the concept of Lie algebra is extended to ternary structures in \cite{Bremner,Filippov}, i.e., the ternary analogue of the Lie bracket is considered to be skew-symmetric. This implies that the ternary bracket does not change under cyclic permutations of its three elements, but changes sign under non-cyclic permutations. In the approach proposed in this paper, we do not require the ternary bracket to be skew-symmetric. Our approach is based on the symmetries of an equilateral triangle, which can be described using the cube roots of unity.

The symmetric groups \( S_3 \) and \( S_5 \), and their subgroups, will play an important role in our approach to extend the concept of Lie algebras to ternary structures. The subgroups of the symmetric group \( S_3 \) are required to describe the symmetry properties of a ternary analogue of the Lie bracket, while the subgroups of the symmetric group \( S_5 \) are necessary for a ternary analogue of the Jacobi identity. Cyclic permutations of three elements form a subgroup of order three in the symmetric group \( S_3 \). This subgroup is called the alternating subgroup of degree 3 and is denoted by \( {\cal A}_3 \). Thus, ${\cal A}_3=\{(1),(1\,2\,3),(1\,3\,2)\}$, where $(1)$ is the identity permutation. The alternating group \( {\cal A}_3 \) is isomorphic to the group \( \mathbb{Z}_3 \) and it is also isomorphic to the multiplicative group of the cube roots of unity $\{1,\omega,\overline\omega\}$, where $\omega$ is a primitive cube root of unity, for example, $\omega=\exp(2\pi i/3)$.

To describe the structure of an identity that we consider as a ternary analog of the Jacobi identity, we need a subgroup of the symmetric group \( S_5 \), which is called the general affine group and is denoted by \( GA(1,5) \). The general affine group of degree 1 over the finite field \( \mathbb{F}_5 \) can be described in several equivalent ways. It is the group of affine transformations of the field \( \mathbb{F}_5 \), that is, the group of all mappings of the form  
\[
x \mapsto a x + b, \quad \text{where } a \in \mathbb{F}_5^\times, \; b \in \mathbb{F}_5.
\]
Additionally, the group \( GA(1,5) \) is isomorphic to the semidirect product of the cyclic group \( \mathbb{Z}_5 \) and the cyclic group \( \mathbb{Z}_4 \):
\[
GA(1,5) \simeq \mathbb{Z}_5 \rtimes \mathbb{Z}_4.
\]

The group \( GA(1,5) \) admits a presentation in terms of two generators \( \sigma \) and \( \tau \) and the following defining relations:
\[
\sigma^5 = e, \quad \tau^4 = e, \quad \tau \sigma \tau^{-1} = \sigma^2,
\]
where \( e \) is the identity element. Here, \( \sigma \) corresponds to the subgroup \( \mathbb{Z}_5 \) (the group of translations), and \( \tau \) corresponds to the subgroup \( \mathbb{Z}_4 \) (the multiplicative group \( \mathbb{F}_5^\times \), acting by scalar multiplication). For our purposes, it is important to consider the representation of the group $GA(1,5)$ by permutations of five elements, that is, its realization as a subgroup of the symmetric group $S_5$. The group $GA(1,5)$ is generated by two cycles 
\begin{equation}
\sigma=(1\;2\;3\;4\;5),\;\;\tau=(2\;4\;5\;3).
\label{cycles of GA(1,5)}
\end{equation}
\noindent
In the context of a ternary analog of the Jacobi identity, the group $GA(1,5)$ plays a crucial role in describing the symmetry structure of the identity. Specifically, the set of permutations appearing in the ternary analog of the Jacobi identity can be identified with the action of $GA(1,5)$ on the indices of the elements involved in the double ternary bracket. This group captures the essential affine symmetries underlying the algebraic identity.

Before giving the definition of a ternary Lie algebra at the cube roots of unity, we will introduce the necessary concepts and notations. Let $\cal L$ be a vector space over the field of complex numbers $\mathbb C$ endowed with a ternary bracket $[-,-,-]:{\cal L}\times {\cal L}\times {\cal L}\to {\cal L}$. In what follows, we assume that this ternary bracket is additive in each argument, that is, $[v_1+v_2,w,u] = [v_1,w,u]+[v_2,w,u]$ and a similar property holds for the second and third arguments. If, in addition, the ternary bracket is $\mathbb C$-homogeneous in each argument, that is, if complex numbers can be factored out of the bracket from any position, we will call the ternary bracket $\mathbb C$-linear. Hence, for a $\mathbb C$-linear ternary bracket and any complex number $a$ we have
$$
[a\,u,v,w]=[u,a\,v,w]=[u,v,a\,w]=a\,[u,v,w].
$$
In the following, we will study examples of ternary brackets that do not have the property of homogeneity with respect to complex numbers but are homogeneous with respect to real numbers. In this case, we will call the ternary bracket \( \mathbb{R} \)-linear. Thus, in the case of \( \mathbb{R} \)-linearity, an additive ternary bracket is defined on a complex vector space $V$, but it does not possess the property of homogeneity with respect to complex numbers; instead, it is homogeneous with respect to real numbers. Of course, if a ternary bracket is homogeneous with respect to complex numbers, then, as a special case, it is also homogeneous with respect to real numbers. However, in this case, we will not use the term \( \mathbb{R} \)-linearity, since homogeneity with respect to real numbers follows as a special case from homogeneity with respect to complex numbers.  

Let \( v_1, v_2, v_3, v_4, v_5 \) be elements of the vector space \( {\cal L} \). The elements of the general affine group \( GA(1,5) \), considered as permutations of five elements, act naturally on the set \( \{1, 2, 3, 4, 5\} \). We introduce the symbol \(
\circlearrowleft\) for the sum of five double ternary brackets obtained by cyclic permutations of the arguments \( v_1, v_2, v_3, v_4, v_5 \) as follows:
$$
\circlearrowleft \big[[v_1,v_2,v_3],v_4,v_5\big]=
    \sum_{k=0}^4\;\big[[v_{\sigma^k(1)},v_{\sigma^k(2)},v_{\sigma^k(3)}],v_{\sigma^k(4)},v_{\sigma^k(5)}\big],
$$
where $\sigma$ is the cyclic permutation \eqref{cycles of GA(1,5)} and $\sigma^0$ is the identity permutation. If the symbol of cyclic permutations of five arguments is placed before parentheses that contain the sum of several double ternary brackets, we mean that the symbol $\circlearrowleft$ is applied to each term separately.
\begin{definition}
A complex vector space ${\cal L}$ with a ternary bracket $[-,-,-]:{\cal L}\times {\cal L}\times {\cal L}\to {\cal L}$ defined on it is said to be a ternary Lie algebra at the cube roots of unity if
\begin{itemize}
\item the ternary bracket is either $\mathbb R$-linear or $\mathbb C$-linear,
\item the cyclic permutation of the three variables of the ternary bracket results in multiplying the entire bracket by a primitive cube root of unity
      \begin{equation} 
         [u,v,w]=\omega\,[v,w,u]=\overline{\omega}\,[w,u,v],\;\;\;u,v,w\in {\cal L},
         \label{omega-symmetry}
      \end{equation}
\item the ternary bracket satisfies the $GA(1,5)$-identity
$$
\circlearrowleft\big(\big[[u,v,w],x,y\big]
    +\big[[u,x,v],y,w\big]+\big[[u,y,x],w,v\big]+
                                    \big[[u,w,y],v,x\big]\big)=0.
$$
\end{itemize}
\label{Def of ternary Lie algebra}
\end{definition}
To simplify the terminology, we will henceforth refer to a ternary Lie algebra at the cube roots of unity as a ternary $\omega$-Lie algebra, where \( \omega \) is a primitive cube root of unity. We will call property \eqref{omega-symmetry} of the ternary bracket $\omega$-symmetry with respect to cyclic permutations of the bracket's variables. Non-cyclic permutations can be used to define a new type of bracket. For example, we can define a new bracket via a reflection-type permutation with respect to the central element, by the formula  
$
\llbracket u,v,w\rrbracket = [w,v,u].
$
This new bracket also exhibits symmetry under cyclic permutations of its arguments, but this symmetry is conjugate to the $\omega$-symmetry \eqref{omega-symmetry}. By this, we mean that the new bracket transforms according to the formula
\begin{equation}
\llbracket u,v,w \rrbracket=\overline{\omega}\,\llbracket v,w,u \rrbracket={\omega}\,\llbracket w,u,v \rrbracket.
\label{conjugate symmetry}
\end{equation}
It is also straightforward to verify that the new bracket satisfies the $GA(1,5)$-identity.

It follows from the $\omega$-symmetry of the ternary bracket that
\begin{equation}
[u_1,u_2,u_3]+[u_2,u_3,u_1]+[u_3,u_1,u_2]=0,\;\;u_1,u_2,u_3\in {\cal L}.
\label{sum of cyclic perm give zero}
\end{equation}
This property indicates an analogy with a Lie bracket. Indeed, let $L$ be a Lie algebra with a Lie bracket 
\( [-,-]: L \times L \to L \). Then
\begin{equation}
[a_1, a_2] + [a_2, a_1] = 0,\;\;\forall a_1,a_2\in L.
\label{skew-symmetry}
\end{equation}
Our approach is based on the following interpretation of this identity. Let \( S_2 = \{(1), (12)\} \) be the symmetric group on two elements. This is a cyclic group of order two. It acts naturally on the indices of the elements in the binary bracket \([a_1, a_2]\); we form the sum of the two resulting brackets under this action and set it equal to zero.  

We extend this idea to the ternary bracket. A cyclic group of order three can be realized via cyclic permutations of three elements — specifically, we take \( \mathcal{A}_3 \) as the cyclic group of order three. Then, we apply this group to the indices of the elements in the ternary bracket \([u_1, u_2, u_3]\), form the sum of the resulting three brackets, and equate it to zero. This yields identity \eqref{sum of cyclic perm give zero}.
Thus, identity (\ref{sum of cyclic perm give zero}) can be seen as a ternary extension of (\ref{skew-symmetry}) to the group of cyclic permutations of three elements.  

It also follows from the $\omega$-symmetry that the ternary bracket vanishes as soon as all three of its arguments are equal: for any $u\in{\cal L}$, it holds that $[u,u,u]=0$. However, in the case where two of its arguments are equal, it does not necessarily vanish. Here, we observe a difference between the properties of the ternary bracket proposed in this paper and those of a ternary bracket in a 3-Lie algebra. In a 3-Lie algebra, the ternary bracket is totally skew-symmetric with respect to its arguments, meaning that it vanishes as soon as any two of its three arguments are equal. Given this property, we could refer to totally skew-symmetric ternary brackets as first-order ternary brackets, whereas a ternary bracket with $\omega$-symmetry could be called a second-order ternary bracket.  

In the theory of Lie algebras, simple Lie algebras play an important role. We can extend the concept of a simple Lie algebra to ternary $\omega$-Lie algebras using the following definition.
\begin{definition}
Let $\cal L$ be a ternary $\omega$-Lie algebra and $\cal I\subset {\cal L}$ be its subspace. Then, $\cal I$ is said to be an ideal of a ternary $\omega$-Lie algebra if, for any $a\in {\cal I}$ and $x,y\in{\cal L}$, it holds $[a,x,y]\in{\cal I}.$ A ternary $\omega$-Lie algebra is said to be simple if it has no non-trivial ideals, that is, it has no ideals other than $\{0\}$ and $\cal L$.
\end{definition}
Finite-dimensional Lie algebras are conveniently studied through their structure constants. In the case of a ternary $\omega$-Lie algebra, we introduce structure constants in a manner analogous to that used in the theory of Lie algebras. Let $\{e_1,e_2,\ldots,e_n\}$ be a basis for the vector space of a ternary $\omega$-Lie algebra $\cal L$. Then
\begin{equation}
[e_i,e_j,e_k]=C^m_{ijk}\,e_m,
\end{equation}
where the complex numbers $C^m_{ijk}$ will be referred to as structure constants of a ternary $\omega$-Lie algebra $\cal L$. The structure constants of a ternary $\omega$-Lie algebra form a \((1,3)\) -tensor, which means that under a basis transformation \( \tilde{e}_i = A^j_i e_j \), the structure constants transform according to the tensor law
\begin{equation}
{\widetilde C}^m_{ijk}=A^p_i\,A^r_j\,A^s_k\,(A^{-1})^m_q\;C^q_{prs}.
\label{tensor law}
\end{equation}
It should be noted that the tensorial transformation law for the structure constants of a ternary $\omega$-Lie algebra depends on the type of linearity of the ternary bracket. If the ternary bracket is $\mathbb{C}$-linear, then the change-of-basis matrix is complex. In the case of an $\mathbb{R}$-linear ternary bracket, only real basis transformations of the ternary $\omega$-Lie algebra should be considered, and in this case formula \eqref{tensor law} defines a tensor representation of a real group, such as the rotation group.

The $\omega$-symmetry of the ternary bracket with respect to cyclic permutations implies the corresponding symmetry of the structure constants with respect to cyclic permutations of the subscripts
\begin{equation}
C^m_{ijk}=\omega\,C^m_{jki}=\overline{\omega}\,C^m_{kij}.
\end{equation}
From the \( GA(1,5) \)-identity, it follows that the structure constants of the ternary $\omega$-Lie algebra satisfy the system of equations
\begin{equation}
\circlearrowleft (C^m_{\underline{i}\,\underline{k}\,\underline{l}}\,C^p_{m\,\underline{r}\,\underline{s}}+C^m_{\underline{i}\,\underline{r}\,\underline{k}}\,C^p_{m\,\underline{s}\,\underline{l}}+C^m_{\underline{i}\,\underline{s}\,\underline{r}}\,C^p_{m\,\underline{l}\,\underline{k}}+C^m_{\underline{i}\,\underline{l}\,\underline{s}}\,C^p_{m\,\underline{k}\,\underline{r}})=0,
\label{identity for structure constants}
\end{equation}
where the underlined five subscripts undergo cyclic permutations, for example
$$
\circlearrowleft C^m_{\underline{i}\,\underline{k}\,\underline{l}}\,C^p_{m\,\underline{r}\,\underline{s}}=C^m_{ikl}\,C^p_{mrs}+C^m_{klr}\,C^p_{msi}+C^m_{lrs}\,C^p_{mik}+C^m_{rsi}\,C^p_{mkl}+C^m_{sik}\,C^p_{mlr}.
$$
We will refer to the system of equations \eqref{identity for structure constants} as the \( GA(1,5) \)-system of equations.
Let \( {\cal T}^{1,3}({\cal L}) \) be the space of complex-valued \((1,3)\)-tensors of the vector space of the ternary omega-Lie algebra \( \cal L \). The tensors possessing omega-symmetry form a subspace in \( {\cal T}^{1,3}({\cal L}) \), which we denote by \( {\cal T}^{1,3}_{\omega}({\cal L}) \). The tensors in the space \( {\cal T}^{1,3}_{\omega}({\cal L}) \) that are solutions of the \( GA(1,5) \)-system of equations are the structure constants of the ternary \( \omega \)-Lie algebra.

Equation \eqref{tensor law} defines a tensor representation of the group \( GL(n, \mathbb{C}) \) (or one of its subgroups) in the space \( {\cal T}^{1,3}_{\omega}({\cal L}) \). Irreducible representations play an important role in this context, as they are directly related to the classification of \( n \)-dimensional ternary \( \omega \)-Lie algebras.
\section{\texorpdfstring{Ternary associativity, ternary $\omega$-associator and ternary $\omega$-commutator}{Ternary associativity, ternary associator and ternary commutator}}
An associative (binary) algebra becomes a Lie algebra if the Lie bracket is defined as the commutator of two elements. In this case, the Jacobi identity is valid due to the associativity of binary multiplication in the original algebra. More precisely, if we expand the double commutators in the Jacobi identity, we obtain a sum of twelve double products, which can be grouped into pairs, each of which forms an associator. Then, by associativity, each such pair vanishes, and the total sum equals zero.  

In this section, we demonstrate that the same scheme can be realized in the case of ternary associativity. We introduce the concepts of the ternary \( \omega \)-associator and the ternary \( \omega \)-commutator and show that, in the case of an associative ternary algebra, the ternary \( \omega \)-commutator satisfies all the requirements of Definition (\ref{Def of ternary Lie algebra}). In particular, it satisfies the \( GA(1,5) \) identity, where the equality to zero is achieved in the same way as in the case of the Jacobi identity: namely, all terms on the left-hand side of the identity are grouped in threes into ternary \( \omega \)-associators, which vanish due to the associativity of ternary multiplication.

Let \( \mathscr A \) be a vector space over complex numbers equipped with a ternary multiplication law 
\(
(u, v, w) \in {\mathscr A} \times {\mathscr A} \times {\mathscr A} \longmapsto u \cdot v \cdot w \in {\mathscr A}.
\)
Here we must make the same remark as in the previous section concerning the linearity of a ternary bracket. When we speak of ternary multiplication, we always assume additivity in each argument, that is
\begin{eqnarray}
&& (u_1+u_2)\cdot v\cdot w=u_1\cdot v\cdot w+u_2\cdot v\cdot w,\nonumber\\
  && u\cdot (v_1+v_2)\cdot w=u\cdot v_1\cdot w+u\cdot v_2\cdot w,\nonumber\\
    && u\cdot v\cdot (w_1+w_2)=u\cdot v\cdot w_1+u\cdot v\cdot w_2.\nonumber
\end{eqnarray}
If, in addition to additivity, the ternary multiplication is $\mathbb{C}$-homogeneous in each argument—that is, complex scalars can be factored out of the ternary product—then we call the ternary multiplication $\mathbb{C}$-linear. Hence, in the case of $\mathbb C$-linear ternary multriplication we have
$$
(\alpha\,u)\cdot v\cdot w=u\cdot (\alpha\,v)\cdot w=u\cdot v\cdot (\alpha\,w)=\alpha\;u\cdot v\cdot w,\;\;\alpha\in{\mathbb C}.
$$
In order to establish a connection with quantum physics, we will consider ternary multiplications that are $\mathbb{C}$-homogeneous in the first and third arguments, but conjugate-homogeneous in the second argument \cite{Bruce,Hestenes}. Such ternary multiplications will be referred to as conjugate-linear. Hence, in the case of conjugate-linear ternary multiplication we have
$$
(\alpha\,u)\cdot v\cdot w=u\cdot v\cdot (\alpha\,w)=\alpha\;u\cdot v\cdot w,\;\;\;
         u\cdot(\alpha\,v)\cdot w=\overline{\alpha}\;u\cdot v\cdot w.
$$
Thus, by a ternary algebra we mean a vector space over the field of complex numbers equipped with a ternary multiplication that is either $\mathbb{C}$-linear or conjugate-linear. 

If the ternary multiplication satisfies the property  
\begin{equation}
(s\cdot u \cdot v) \cdot x \cdot y =s \cdot (u \cdot v \cdot x) \cdot y = s \cdot u \cdot (v \cdot x \cdot y),
\label{associativity I}
\end{equation}  
then the ternary algebra \( \mathscr A \) is called associative of the first kind. If instead the property  
\begin{equation}
(s\cdot u \cdot v) \cdot x \cdot y =s \cdot (x \cdot v \cdot u) \cdot y = s \cdot u \cdot (v \cdot x \cdot y),
\label{associativity II}
\end{equation}  
holds, then \( \mathscr A \) is said to be associative of the second kind. In what follows, we will refer to the ternary algebra \( \mathscr A \) simply as associative if it satisfies one of the above types of associativity, provided that the structure under consideration does not depend on the specific type of ternary associativity. It should be noted that the associativity of the second kind \eqref{associativity II} appears under various names in the literature. A list of these names is provided in \cite{Zapata-Arsiwalla-Beynon}, where the corresponding references are also given. In this work, we use the term "associativity of the second kind" as proposed in \cite{Carlsson}.

One might assume that associativity of the first kind is the fundamental one, while associativity of the second kind is something more specific. However, this is not the case. Ternary structures with associativity of the second kind play an important role in applications. If we do not assume a vector space structure on \( \mathscr A \)—that is, if we treat \( \mathscr A \) simply as a set with a ternary operation satisfying associativity of the second kind—then we arrive at a structure known as a semi-heap. The theory of heaps and semi-heaps was developed in the works of Wagner \cite{Wagner 1,Wagner 2}  in connection with an algebraic approach to the concept of a smooth atlas on a manifold. Moreover, it is possible to construct a ternary multiplication of rectangular matrices such that it satisfies associativity of the second kind \cite{Hestenes}. Finally, ternary multiplications of cubic (i.e., three-dimensional) matrices possessing associativity of the second kind were constructed in \cite{Abramov-Kerner-Liivapuu-Shitov}, where it was also shown that there exists no ternary multiplication of cubic matrices satisfying associativity of the first kind. In the following sections, we will use these structures to construct ternary \( \omega \)-Lie algebras.

Let \( \mathscr A \) be a ternary algebra. We define two quinary operations on \( \mathscr A \) as follows:  
\begin{eqnarray}
&& {\mathfrak Q}^{(1)}_{\omega}(s,u,v,x,y) = (s\cdot u \cdot v) \cdot x \cdot y + \omega\; s\cdot (u \cdot v \cdot x) \cdot y + \overline{\omega}\; s\cdot u \cdot (v \cdot x \cdot y),\nonumber\\
&& {\mathfrak Q}^{(1)}_{\overline{\omega}}(s,u,v,x,y) = (s\cdot u \cdot v) \cdot x \cdot y + \overline{\omega}\; s\cdot (u \cdot v \cdot x) \cdot y + {\omega}\; s\cdot u \cdot (v \cdot x \cdot y).\nonumber
\end{eqnarray}
where \( \omega \) is a primitive cube root of unity and \( \overline{\omega} \) is its complex conjugate. These two quinary operations, \( {\mathfrak Q}_{\omega} \) and \( {\mathfrak Q}_{\overline{\omega}} \), will be referred to as the ternary \( \omega \)-associator of first kind and the ternary \( \overline{\omega} \)-associator of first kind, respectively. Similarly, we define the ternary \( \omega \)- and \( \overline{\omega} \)-associators of the second kind
\begin{eqnarray}
&& {\mathfrak Q}^{(2)}_{\omega}(s,u,v,x,y) = (s\cdot u \cdot v) \cdot x\cdot y + \omega\; s \cdot (x \cdot v \cdot u) \cdot y + \overline{\omega}\;s\cdot u \cdot (v \cdot x \cdot y),\nonumber\\
&& {\mathfrak Q}^{(2)}_{\overline{\omega}}(s,u,v,x,y) = (s\cdot u \cdot v) \cdot x\cdot y + \overline{\omega}\; s \cdot (x \cdot v \cdot u) \cdot y + {\omega}\; s\cdot u \cdot (v \cdot x \cdot y).\nonumber
\end{eqnarray}
From the identity \(1 + \omega + \overline{\omega} = 0\) for the cube roots of unity, it follows that if the ternary multiplication in the algebra \(\mathscr A\) is associative of the first kind, then the ternary \(\omega\)- and \(\overline{\omega}\)-associators of the first kind vanish. The same holds for the case of second-kind associativity and the corresponding \(\omega\)- and \(\overline{\omega}\)-associators of the second kind.  

The converse is also true: if both the ternary \(\omega\)- and \(\overline{\omega}\)-associators vanish for any five elements of the ternary algebra \(\mathscr A\), then the ternary multiplication is associative. We will prove this only for the ternary $\omega$-associators of the first kind; the case of the second kind is treated analogously. The ternary associators of the ternary algebra \(\mathscr A\) are defined as two quinary operations, \( {\mathfrak t}_1 \) and \( {\mathfrak t}_2 \), given by the following formulas \cite{Bremner}:
\begin{eqnarray}
&& {\mathfrak t}_1(s,u,v,x,y)=(s \cdot u \cdot v) \cdot x \cdot y-s \cdot (u \cdot v \cdot x) \cdot y,\nonumber\\
   && {\mathfrak t}_2(s,u,v,x,y)=s \cdot (u \cdot v \cdot x) \cdot y-s \cdot u \cdot (v \cdot x \cdot y).\nonumber
\end{eqnarray}
Obviously, if the ternary associators \( {\mathfrak t}_1 \) and \( {\mathfrak t}_2 \) vanish identically, then \( \mathscr A \) is a ternary algebra with associativity of the first kind. However, it is easy to see that the ternary associators \( {\mathfrak t}_1 \) and \( {\mathfrak t}_2 \) can be expressed in terms of the first-kind \( \omega \)- and \( \overline{\omega} \)-associators as follows:
$$
{\mathfrak t}_1=\frac{{\mathfrak Q}^{(1)}_{\omega}-\omega\,{\mathfrak Q}^{(1)}_{\overline{\omega}}}{1-\omega},\;\;
    {\mathfrak t}_2=\frac{{\mathfrak Q}^{(1)}_{\omega}-{\mathfrak Q}^{(1)}_{\overline{\omega}}}{\omega-\overline{\omega}}.
$$
It follows that the identically vanishing ternary \( \omega \)- and \( \overline{\omega} \)-associators imply the vanishing of the ternary associators \( {\mathfrak t}_1 \) and \( {\mathfrak t}_2 \), which in turn implies associativity of the first kind. Thus, we proved the following statement:
\begin{proposition}
A ternary algebra $\mathscr{A}$ is an associative ternary algebra of the first (second) kind if and only if the ternary $\omega$- and $\overline{\omega}$-associators of the first (second) kind \( {\mathfrak Q}^{(1)}_{\omega}, {\mathfrak Q}^{(1)}_{\overline{\omega}} \) (\( {\mathfrak Q}^{(2)}_{\omega}, {\mathfrak Q}^{(2)}_{\overline{\omega}} \)) vanish identically.
\end{proposition}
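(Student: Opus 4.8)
The plan is to prove both implications by treating the three ternary products
\[
A = (s\cdot u\cdot v)\cdot x\cdot y,\quad B = s\cdot(u\cdot v\cdot x)\cdot y,\quad C = s\cdot u\cdot(v\cdot x\cdot y)
\]
as elements of $\mathscr{A}$ and analyzing the linear relations among the associators built from them. The forward implication is immediate: if $\mathscr{A}$ is associative of the first kind then $A=B=C$ by \eqref{associativity I}, so $\mathfrak{Q}^{(1)}_{\omega}=A+\omega B+\overline{\omega}C=(1+\omega+\overline{\omega})A=0$ because $1+\omega+\overline{\omega}=0$, and the identical computation disposes of $\mathfrak{Q}^{(1)}_{\overline{\omega}}$.

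The content lies in the converse. Here I would observe that both $\mathfrak{Q}^{(1)}_{\omega}=A+\omega B+\overline{\omega}C$ and $\mathfrak{Q}^{(1)}_{\overline{\omega}}=A+\overline{\omega}B+\omega C$ are linear combinations of $A,B,C$ whose coefficient vectors $(1,\omega,\overline{\omega})$ and $(1,\overline{\omega},\omega)$ are each orthogonal to $(1,1,1)$ and linearly independent over $\mathbb{C}$. Consequently they span the same two-dimensional difference space as the ordinary associators $\mathfrak{t}_1=A-B$ and $\mathfrak{t}_2=B-C$, so $\mathfrak{t}_1$ and $\mathfrak{t}_2$ are necessarily expressible through $\mathfrak{Q}^{(1)}_{\omega}$ and $\mathfrak{Q}^{(1)}_{\overline{\omega}}$. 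Carrying out the explicit inversion, using $\omega\overline{\omega}=1$ and $\omega^2=\overline{\omega}$, reproduces the two formulas displayed just above the proposition; hence vanishing of both $\omega$-associators forces $\mathfrak{t}_1=\mathfrak{t}_2=0$, that is $A=B=C$, which is exactly associativity of the first kind.

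For the second kind I would run the identical argument after replacing the middle product $B$ throughout by $B'=s\cdot(x\cdot v\cdot u)\cdot y$, so that $A=B'=C$ now encodes \eqref{associativity II}. Since only the internal labeling of the middle term changes while the scalar coefficients $1,\omega,\overline{\omega}$ are untouched, the Vandermonde-type inversion goes through verbatim, and the corresponding second-kind associators vanish if and only if $\mathfrak{Q}^{(2)}_{\omega}$ and $\mathfrak{Q}^{(2)}_{\overline{\omega}}$ do.

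The main obstacle — really the only nontrivial point — is verifying the invertibility of the passage from $(\mathfrak{Q}^{(1)}_{\omega},\mathfrak{Q}^{(1)}_{\overline{\omega}})$ back to $(\mathfrak{t}_1,\mathfrak{t}_2)$, which reduces to the nonvanishing of the $2\times2$ minor formed by $(1,\omega,\overline{\omega})$ and $(1,\overline{\omega},\omega)$ after the symmetric part $A+B+C$ is projected out. This amounts to $1-\omega\neq0$ and $\omega-\overline{\omega}\neq0$ for a primitive cube root of unity, which legitimizes the denominators in the inversion formulas, so no genuine difficulty arises.
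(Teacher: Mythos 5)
Your proof is correct and follows essentially the same route as the paper: the forward direction via $1+\omega+\overline{\omega}=0$, and the converse by inverting the linear relation to recover the Bremner associators $\mathfrak{t}_1,\mathfrak{t}_2$ from $\mathfrak{Q}^{(1)}_{\omega},\mathfrak{Q}^{(1)}_{\overline{\omega}}$ --- indeed you arrive at exactly the two inversion formulas the paper displays, with the second kind handled analogously. Your orthogonal-complement/Vandermonde framing is merely a conceptual gloss on that same computation, not a different argument.
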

We now proceed to construct a ternary $\omega$-Lie algebra based on a ternary associative algebra $\mathscr{A}$ (of the first or second kind). To this end, we define the ternary bracket as follows:
\begin{equation}
[s,u,v] = s\cdot u\cdot v + \omega\; u\cdot v\cdot s + \overline{\omega}\; v\cdot s\cdot u + v\cdot u\cdot s + \overline{\omega}\; u\cdot s\cdot v + \omega\; s\cdot v\cdot u.
\label{omega-bracket}
\end{equation}
It is straightforward to verify that the ternary bracket \eqref{omega-bracket} possesses $\omega$-symmetry; that is, a cyclic permutation of the bracket's arguments $s \to u$, $u \to v$, $v \to s$ results in multiplication of the entire bracket by $\omega$, that is, $[s,u,v]=\omega\,[u,v,s]$. As noted above, if a ternary bracket exhibits $\omega$-symmetry, we may define a new ternary bracket with conjugate symmetry, i.e., $\overline\omega$-symmetry. Indeed, let us define the new ternary bracket as previously indicated, namely, by formula $\llbracket s,u,v\rrbracket=[v,u,s]$. In this case, an explicit expression for the new ternary bracket can be obtained
\begin{equation}
\llbracket s,u,v\rrbracket=s\cdot u\cdot v + \overline\omega\; u\cdot v\cdot s + {\omega}\; v\cdot s\cdot u + v\cdot u\cdot s + {\omega}\; u\cdot s\cdot v + \overline\omega\; s\cdot v\cdot u.
\label{omega-conjugate-bracket}
\end{equation}
Thus, the new ternary bracket can be obtained from bracket \eqref{omega-bracket} by replacing $\omega \leftrightarrow \overline{\omega}$.

Obviously, if the ternary multiplication is $\mathbb{C}$-linear, then the ternary bracket \eqref{omega-bracket} is also $\mathbb{C}$-linear. In the case where the ternary multiplication in $\mathscr{A}$ is conjugate-linear, the ternary bracket is neither $\mathbb{C}$-homogeneous nor conjugate-homogeneous with respect to any of its arguments. The reason is that the ternary bracket is constructed as a linear combination of all permutations of the arguments in the ternary product. As a result of these permutations, a given argument in the ternary bracket \eqref{omega-bracket} shifts its position within the ternary product and thereby loses both homogeneity and conjugate-homogeneity. However, a conjugate-linear ternary multiplication is, in particular, $\mathbb{R}$-linear, and this linearity is preserved in the construction of the ternary commutator \eqref{omega-bracket}. Thus, in the case of a conjugate-linear ternary multiplication, the resulting ternary bracket is neither $\mathbb{C}$-linear nor conjugate-linear, but it inherits the $\mathbb{R}$-linearity of the original ternary multiplication. We will refer to the ternary brackets \eqref{omega-bracket},\eqref{omega-conjugate-bracket} as the ternary $\omega$-bracket and $\overline{\omega}$-bracket, respectively. 

In the case of a binary algebra, the commutator of two elements measures the non-commutativity of the multiplication in the algebra. In other words, if the binary algebra is commutative—i.e., swapping two factors in the product does not change the result—then the commutator of any two elements vanishes. It is natural to require an analogous property for the ternary commutator introduced above \eqref{omega-bracket}. Recall that a ternary multiplication is said to be commutative if its value remains unchanged under any permutation of its arguments. It is easy to see that, in the case of a commutative ternary multiplication, the ternary commutator \eqref{omega-bracket} vanishes for any three elements of the algebra. The condition of commutativity for ternary multiplication is a strong one. It can be weakened by considering various types of ternary commutativity. For instance, if the ternary product remains invariant under the permutation of the first two factors, such a ternary multiplication will be called left-commutative. Accordingly, for a left-commutative ternary multiplication, identity $s\cdot u\cdot v=u\cdot s\cdot v$ holds. In the case of a left-commutative ternary multiplication, the ternary commutator (1) becomes, so to speak, a “truncated” version
\begin{eqnarray}
&& [ s,u,v ] = (1+\overline{\omega})\;s\cdot u\cdot v +(1+ \omega)\;u\cdot v\cdot s +(\omega+ \overline\omega)\;v\cdot s\cdot u\nonumber\\
    &&\qquad\quad\; =-\omega\;s\cdot u\cdot v-\overline\omega\;u\cdot v\cdot s-v\cdot s\cdot u\nonumber.
\end{eqnarray}
In the case of a left-commutative ternary algebra, we shall omit the inessential factor $-1$ and use the truncated form of the ternary $\omega$-commutator
\begin{equation}
[ s,u,v ] = v\cdot s\cdot u + \omega\;s\cdot u\cdot v + \overline\omega\,u\cdot v\cdot s.
\end{equation}
If the ternary multiplication remains unchanged under cyclic permutations of its arguments, such multiplication is called cyclically commutative. Thus, for any three elements of a ternary algebra with cyclically commutative multiplication, the identity
$s \cdot u \cdot v = u \cdot v \cdot s = v \cdot s \cdot u$
holds. In the case of cyclically commutative ternary multiplication, the ternary commutator \eqref{omega-bracket} vanishes identically. It is clear that if a ternary multiplication is both left-commutative and cyclically commutative, then it is commutative.

An important class of Lie algebras consists of those that can be constructed using the commutator defined on an associative algebra. For example, all matrix Lie algebras belong to this class. As follows from the theorem presented in the following, the ternary $\omega$-commutator \eqref{omega-bracket} allows this construction to be extended to associative (of the first or second kind) ternary algebras and ternary $\omega$-Lie algebras.
\begin{theorem}
Let $\mathcal{A}$ be an associative ternary algebra of the first or second kind. Then the ternary $\omega$-commutator \eqref{omega-bracket} satisfies the $GA(1,5)$-identity; that is, the associative ternary algebra $\mathcal{A}$, equipped with the ternary $\omega$-commutator, is a ternary $\omega$-Lie algebra.
\end{theorem}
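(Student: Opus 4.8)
The plan is to imitate the classical derivation of the Jacobi identity from associativity, with binary associators replaced by the ternary $\omega$-associators of the appropriate kind and the vanishing supplied by the Proposition, which guarantees that $\mathfrak{Q}^{(1)}_{\omega},\mathfrak{Q}^{(1)}_{\overline\omega}$ (resp.\ $\mathfrak{Q}^{(2)}_{\omega},\mathfrak{Q}^{(2)}_{\overline\omega}$) vanish identically on an associative ternary algebra. Concretely, I would show that once the double $\omega$-commutators in the $GA(1,5)$-identity are fully expanded, the left-hand side decomposes into a sum of ternary $\omega$- and $\overline\omega$-associators of the relevant kind, each evaluated at some permutation of $u,v,w,x,y$. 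Since every such associator is zero by the Proposition, the whole sum is zero, which is exactly the $GA(1,5)$-identity required by Definition~\ref{Def of ternary Lie algebra}; the $\omega$-symmetry and the $\mathbb R$/$\mathbb C$-linearity demanded there were already verified for the bracket \eqref{omega-bracket} in the text preceding the theorem.

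First I would record a structural reformulation of the left-hand side. The four ``seed'' double brackets are precisely the $\langle\tau\rangle\cong\mathbb Z_4$ orbit of $[[v_1,v_2,v_3],v_4,v_5]$ under $\tau=(2\,4\,5\,3)$ acting by relabeling of arguments (one checks $[[u,x,v],y,w]=\tau\cdot\text{seed}$, $[[u,y,x],w,v]=\tau^2\cdot\text{seed}$, $[[u,w,y],v,x]=\tau^3\cdot\text{seed}$), while $\circlearrowleft$ adjoins the $\langle\sigma\rangle\cong\mathbb Z_5$ orbit under $\sigma=(1\,2\,3\,4\,5)$. Since $\langle\sigma,\tau\rangle=GA(1,5)$ and every element factors uniquely as $\sigma^i\tau^j$, the left-hand side is exactly $\sum_{g\in GA(1,5)}[[v_{g(1)},v_{g(2)},v_{g(3)}],v_{g(4)},v_{g(5)}]$, a sum of $20$ double brackets taken over the full group. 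This identification is what makes the grouping into associators systematic, and it explains \emph{a posteriori} why this particular subgroup and this particular quadruple of seeds occur in the definition.

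Next I would expand. Each double bracket substitutes an inner $\omega$-bracket (six ternary products) into one of the three argument slots of an outer $\omega$-bracket (six products), yielding $36$ five-fold products per double bracket and $720$ in all. Every expanded term is a five-fold product carrying a coefficient in $\{1,\omega,\overline\omega\}$, together with a parenthesization placing the inner bracket in the left, middle, or right position. I would then collect the terms into triples sharing a parenthesization pattern: for the first kind a triple is the left-, middle-, and right-parenthesized product of one fixed order $a,b,c,d,e$ with coefficients in the ratio $1:\omega:\overline\omega$ (an $\mathfrak{Q}^{(1)}_\omega$) or $1:\overline\omega:\omega$ (an $\mathfrak{Q}^{(1)}_{\overline\omega}$), while for the second kind the middle member instead carries the reversed order $a,d,c,b,e$, matching the definition of $\mathfrak{Q}^{(2)}_\omega$ exactly. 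The heart of the proof is the bookkeeping showing that the $720$ terms partition into $240$ such triples with no remainder and with coefficients always in one of these two ratios.

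The main obstacle is precisely this coefficient bookkeeping: I must verify the grouping closes perfectly and, in the second-kind case, correctly track the order-reversal of the middle three arguments when matching a middle-parenthesized term to its left/right partners (it is here that the two kinds genuinely diverge, the second-kind associator being engineered to absorb that reversal). To keep the task finite I would exploit the $GA(1,5)$-reformulation above: the expansion commutes with relabeling, so the $720$ terms split into $GA(1,5)$-orbits and the associator grouping may be chosen equivariantly, reducing the verification to a few orbit representatives and propagating by the group action. Treating the first and second kinds in parallel and checking these representatives completes the argument, after which the Proposition forces each triple, hence the entire sum, to vanish.
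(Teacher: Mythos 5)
Your proposal follows essentially the same route as the paper: the paper's own remark on the proof (with full details deferred to \cite{Abramov_2024}) consists precisely of expanding the double $\omega$-commutators on the left-hand side of the $GA(1,5)$-identity and grouping all resulting ternary products in threes into ternary $\omega$- and $\overline{\omega}$-associators of the first or second kind, which then vanish by the assumed associativity. Your added structure---identifying the twenty double brackets with the elements $\sigma^i\tau^j$ of $GA(1,5)$, the count $720=3\cdot 240$, and the equivariant choice of associator triples---is a consistent refinement of that same argument rather than a different approach.
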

(See~\cite{Abramov_2024} for a proof.)

As a remark on the proof, the following should be noted. The proof essentially reduces to verifying the validity of the $GA(1,5)$-identity for the ternary $\omega$-commutator \eqref{omega-bracket}. Interestingly, this verification exhibits a complete analogy with the Jacobi identity for the binary commutator. Specifically, when expanding the brackets of the ternary $\omega$-commutator in the left-hand side of the $GA(1,5)$-identity using \eqref{omega-bracket}, all ternary products can be grouped in threes into ternary $\omega$- or $\overline\omega$-associators of the first or second kind. Consequently, due to the ternary associativity of the algebra $\mathcal{A}$, the whole sum vanishes.
\section{Construction of semiheaps and associative ternary algebras}
In this section, we address the question of constructing associative ternary algebras. A ternary multiplication can be constructed from binary ones, that is, by successive application of binary multiplications. However, we are interested in those ternary multiplications that cannot be reduced to a combination of binary multiplications. In this section we propose a general structure that can be used to construct various associative of the first or second kind ternary algebras. 

Let $\mathscr{M}$ be a set, and let $\alpha$ be a mapping that assigns to each ordered pair $(u, v)$ of elements in $\mathscr{M}$ a transformation $\alpha_{u,v}$ of the set $\mathscr{M}$. Since the set of all transformations of $\mathscr{M}$ forms a semigroup under composition, we may regard $\alpha$ as a mapping from the direct product $\mathscr{M} \times \mathscr{M}$ into the semigroup of transformations of $\mathscr{M}$. Define a ternary multiplication on the set $\mathscr{M}$ by
\begin{equation}
u \cdot v \cdot w = \alpha(u,v) w,
\label{general ternary multiplication}
\end{equation}
where $u, v, w \in \mathscr{M}$, $\alpha(u, v):{\mathscr M}\to{\mathscr M}$ and the transformation $\alpha(u, v)$ maps the element $w$ to the element $\alpha(u, v)\, w$. Leaving some freedom of expression, the ternary multiplication process \eqref{general ternary multiplication} can be described as follows: From the first two elements, we construct an operator and then apply this operator to the third element. All ternary multiplications that we will use in the sequel possess this structure.

We will determine the conditions that the mapping $\alpha$ must satisfy for the ternary multiplication \eqref{general ternary multiplication} to be associative of the first or second kind. Note that identity
\begin{equation}
(u \cdot v \cdot w) \cdot s \cdot t = u \cdot v \cdot (w \cdot s \cdot t),
\label{Icondition I}
\end{equation}
must hold in both cases, that is, first-kind and second-kind associativity. The second identity takes the form
\begin{equation}
u \cdot (v \cdot w \cdot s) \cdot t = u \cdot v \cdot (w \cdot s \cdot t),
\label{Icondition II}
\end{equation}
in the case of first-kind associativity, and the form
\begin{equation}
u \cdot (v \cdot w \cdot s) \cdot t = u \cdot s \cdot (w \cdot v \cdot t),
\label{IIcondition II}
\end{equation}
in the case of second-kind associativity. For the mapping $\alpha$, we introduce the transposed mapping defined by $\alpha^T(u, v) = \alpha(v, u)$.
\begin{proposition}
The necessary and sufficient conditions for \eqref{Icondition I}, \eqref{Icondition II}, and \eqref{IIcondition II} are the following:
\begin{eqnarray}
&& \eqref{Icondition I}\;\Leftrightarrow\;\alpha(\alpha(u,v) w,s)=\alpha(u,v)\circ\alpha(w,s),\label{propostion_1_I}\\
&& \eqref{Icondition II}\;\Leftrightarrow\;\alpha(u,\alpha(v,w) s)=\alpha(u,v)\circ\alpha(w,s),\label{propostion_1_II}\\
&& \eqref{IIcondition II}\;\Leftrightarrow\;\alpha(u,\alpha(v,w) s)=\alpha(u,s)\circ\alpha^T(v,w).\label{propostion_1_III}
\end{eqnarray}
\label{Proposition associativity conditions}
\end{proposition}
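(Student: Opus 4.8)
The plan is to translate each of the three ternary identities \eqref{Icondition I}, \eqref{Icondition II}, \eqref{IIcondition II} into an identity between transformations of $\mathscr M$ by repeatedly substituting the defining formula \eqref{general ternary multiplication}, $u\cdot v\cdot w=\alpha(u,v)\,w$, and then arguing that two transformations agreeing on every element of $\mathscr M$ must coincide. Each of the three identities involves a distinguished ``free'' fifth element $t$, so the strategy in every case is to rewrite both sides as a single transformation applied to $t$; equating those transformations for all $t$ yields exactly the asserted operator equation. Since every step is an equivalence, this simultaneously gives both directions of each $\Leftrightarrow$.

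First I would handle \eqref{Icondition I}. Reading $(u\cdot v\cdot w)\cdot s\cdot t$ from the outside, the operator is built from the pair $(u\cdot v\cdot w,\,s)$ and applied to $t$, so the left-hand side becomes $\alpha(\alpha(u,v)w,\,s)\,t$; on the right-hand side $u\cdot v\cdot(w\cdot s\cdot t)$ the operator $\alpha(u,v)$ is applied to $w\cdot s\cdot t=\alpha(w,s)\,t$, giving $\bigl(\alpha(u,v)\circ\alpha(w,s)\bigr)\,t$. Matching these for all $t$ gives \eqref{propostion_1_I}. For \eqref{Icondition II} the right-hand side is identical to the one just computed, while the left-hand side $u\cdot(v\cdot w\cdot s)\cdot t$ has middle argument $v\cdot w\cdot s=\alpha(v,w)\,s$, hence equals $\alpha(u,\alpha(v,w)s)\,t$; this yields \eqref{propostion_1_II}. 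For \eqref{IIcondition II} the left-hand side coincides with that of \eqref{Icondition II}, and the right-hand side $u\cdot s\cdot(w\cdot v\cdot t)$ produces $\bigl(\alpha(u,s)\circ\alpha(w,v)\bigr)\,t$; rewriting $\alpha(w,v)=\alpha^T(v,w)$ gives \eqref{propostion_1_III}.

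The computations are routine once the bookkeeping is fixed, so the only real subtlety — and where I would be most careful — is twofold. The first point is to justify passing from ``the two transformations agree on every $t\in\mathscr M$'' to ``the two transformations are equal'': this is immediate because each $\alpha(u,v)$ is an honest map $\mathscr M\to\mathscr M$ and pointwise equality of maps is equality of maps, which is also what makes each step a genuine equivalence rather than a one-way implication. The second point, specific to the second-kind case \eqref{IIcondition II}, is to keep the order of arguments straight so that the reflected factor $w\cdot v\cdot t$ correctly produces $\alpha(w,v)$ and is then recognized as $\alpha^T(v,w)$ via the definition $\alpha^T(u,v)=\alpha(v,u)$; it is precisely this transpose that distinguishes \eqref{propostion_1_III} from \eqref{propostion_1_II}. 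No property of composition beyond the associativity already present in the transformation semigroup of $\mathscr M$ is required, so nothing further must be assumed.
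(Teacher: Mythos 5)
Your proposal is correct and follows essentially the same route as the paper's proof: expand both sides of each identity via $u\cdot v\cdot w=\alpha(u,v)\,w$, express them as a single transformation applied to the free element $t$, and equate the transformations. Your explicit treatment of the quantification over $t$ and of the transpose $\alpha^T(v,w)=\alpha(w,v)$ in the second-kind case merely spells out details the paper leaves to the reader (it only writes out the first case and says the others are proved similarly).
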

\begin{proof}
We have
$$
u\cdot v\cdot (w\cdot s\cdot t)=\big(\alpha(u,v)\circ\alpha(w,s)\big) t
$$
and
$$
(u\cdot v\cdot w)\cdot s\cdot t=\big(\alpha(u,v) w \big)\cdot s\cdot t=
       \alpha\big(\alpha(u,v) w,s\big) t.
$$
It follows from this that \eqref{propostion_1_I} holds. Similarly, \eqref{propostion_1_II} and \eqref{propostion_1_III} are proved.
\end{proof}
Thus, it follows from the proven proposition that the set $\mathscr{M}$ with the ternary multiplication \eqref{general ternary multiplication}, where the mapping $\alpha$ satisfies conditions \eqref{propostion_1_I} and \eqref{propostion_1_III}, is a semiheap.

The most well-known example of a semiheap is the set $\mathfrak{P}(A, B)$ of all binary relations between the elements of sets $A$ and $B$. Recall that for two binary relations $R \in \mathfrak{P}(A, B)$ and $S \in \mathfrak{P}(B, C)$, their composition is defined by the formula
$$
R \circ S = \{ (a, c) \in A \times C : \exists\; b \in B,\ (a, b) \in R,\ (b, c) \in S \}.
$$
By its very definition, composition is not a closed binary operation. However, it is possible to construct a closed ternary multiplication of binary relations by combining the composition of relations with their inversion. Recall that
$$
R^{-1} = \{ (b, a) \in B \times A : (a, b) \in R \}.
$$
We now show that the ternary multiplication of binary relations admits the structure of the ternary multiplication \eqref{general ternary multiplication}.
Let $R, S$ be binary relations between elements of the sets $A$ and $B$. Define a mapping
$$
\alpha : \mathfrak{P}(A, B) \times \mathfrak{P}(A, B) \to \mathfrak{P}(A),
$$
where $\mathfrak{P}(A)$ denotes the set of all binary relations on the set $A$, by the formula
$$
\alpha(R, S) = R \circ S^{-1}.
$$
A binary relation on the set $A$, via composition of relations, defines a transformation of the set $\mathfrak{P}(A, B)$, that is,
$$
T \in \mathfrak{P}(A, B) \mapsto V \circ T \in \mathfrak{P}(A, B),
$$
where $V \in \mathfrak{P}(A)$. Following formula \eqref{general ternary multiplication}, we define the ternary multiplication of binary relations between elements of the sets $A$ and $B$ by
$$
R \cdot S \cdot T = \alpha(R, S) \circ T = R \circ S^{-1} \circ T.
$$
The mapping $\alpha$ satisfies conditions \eqref{propostion_1_I} and \eqref{propostion_1_III}, which follows directly from the associativity of the composition of binary relations and the properties of relational inversion.

Since we are interested in associative trilinear ternary operations on sets equipped with the structure of a vector space, that is, in associative ternary algebras, we apply the approach of \eqref{general ternary multiplication} in the case where $\mathscr{M}$ is an additive abelian group, $\alpha: \mathscr{M} \times \mathscr{M} \to \mathscr{A}$, where $\mathscr{A}$ is a unital associative ring, and $\phi: \mathscr{A} \to \text{End}\,\mathscr{M}$ is a representation of the ring $\mathscr{A}$ in the group $\mathscr{M}$. Multiplication in the ring $\mathscr{A}$ will be denoted by the juxtaposition of elements and the endomorphism of the group $\mathscr{M}$ corresponding to an element $a$ of the ring $\mathscr{A}$ will be denoted by $\phi_a$. It is clear that $\mathscr{M}$ is a left $\mathscr{A}$-module if the left action of the ring $\mathscr{A}$ is defined by the formula $a \centerdot u = \phi_a(u)$, where $a \in \mathscr{A}$, $u \in \mathscr{M}$. Note that $\mathscr{M}$ is called a representation module of the ring $\mathscr{A}$. We assume that $\alpha: \mathscr{M} \times \mathscr{M} \to \mathscr{A}$ is an $\mathscr{A}$-valued 2-form, additive in each argument. Thus, we have the sequence of mappings
\begin{equation}
{\mathscr M}\times{\mathscr M}\;\xrightarrow{\hspace{0.2cm}\alpha\hspace{0.2cm}}\;{\mathscr A}\;\xrightarrow{\hspace{0.2cm}\phi\hspace{0.2cm}}\;\text{End}\,{\mathscr M}.
\label{sequence of mappings}
\end{equation}
This sequence may serve as a basis for constructing a ternary multiplication \eqref{general ternary multiplication} on the $\mathscr A$-module $\mathscr M$. The structure of this ternary multiplication can be described as follows: Given three elements $u, v, w$ of the module $\mathscr{M}$, we first associate with the pair $u, v$ an element of the ring $\alpha(u, v)$ ; then, using the representation $\phi$ of the ring $\mathscr{A}$, we map the element of the ring $\alpha(u, v)$ to the endomorphism $\phi_{\alpha(u,v)}$, and apply this endomorphism to the third element $w$, obtaining the result of the ternary product. If we denote this ternary product by $u\cdot v\cdot w$ then
\begin{equation}
u\cdot v\cdot w=\phi_{\alpha(u,v)}(w)=\alpha(u,v)\centerdot w.
\label{ternary product alpha}
\end{equation}
Obviously, the ternary product \eqref{ternary product alpha} is additive in each argument. 

Conditions \eqref{propostion_1_I}, \eqref{propostion_1_II}, and \eqref{propostion_1_III} now take the form
\begin{equation}
\alpha(\alpha(u,v)\centerdot w,s)=\alpha(u,v)\alpha(w,s)= 
\begin{cases}
\alpha(u,\alpha(v,w)\centerdot s), & \text{(I kind associativity)}\\
\alpha(u,\alpha(s,w)\centerdot v), & \text{(II kind associativity)}.
\label{associativity alpha}
\end{cases}
\end{equation}
It is easy to show that a statement analogous to Proposition \ref{Proposition associativity conditions} holds, namely, that from conditions \eqref{associativity alpha}, there follow first-kind or second-kind ternary associativity of the ternary multiplication \eqref{ternary product alpha}. These conditions are sufficient for the associativity of the ternary multiplication \eqref{ternary product alpha} in the case where the left $\mathscr{A}$-module $\mathscr{M}$ is exact, that is, its annihilator consists only of the zero element of the ring, and $\text{Im}\,\alpha = \mathscr{A}$.

In the case of a right module, a ternary multiplication can be defined by a formula analogous to formula \eqref{ternary product alpha}. Let $\mathscr{M}$ be a right $\mathscr{B}$-module, where $\mathscr{B}$ is a unital associative ring, and let $\beta: \mathscr{M} \times \mathscr{M} \to \mathscr{B}$ be an additive $\mathscr{B}$-valued 2-form. The ternary multiplication on $\mathscr{M}$ is defined by
\begin{equation}
u \cdot v \cdot w = u \centerdot \beta(v, w).
\label{ternary product beta}
\end{equation}
In this case, the conditions for ternary associativity take the form
\begin{equation}
\beta(v,w\centerdot\beta(s,t)) = \beta(v,w)\,\beta(s,t)= 
\begin{cases}
\beta(v\centerdot \beta(w,s),t), & \text{(I kind associativity)}\\
\beta(s\centerdot\beta(w,v),t), & \text{(II kind associativity)}.
\label{associativity beta}
\end{cases}
\end{equation}
The described construction of ternary multiplication extends naturally to the case where $\mathscr{A}$ is a unital associative algebra, $\mathscr{M}$ is a left module over the algebra $\mathscr{A}$, and $\alpha$ is a bilinear $\mathscr{A}$-valued form. All vector spaces are assumed to be over the field of complex numbers, and the module structure is assumed to be linear in both arguments. Analogously, the case of a right module over an algebra can be considered. Since $\mathscr{M}$ now has the structure of a complex vector space, equipping it with ternary multiplication yields a ternary algebra. This ternary algebra will be denoted $(\mathscr{M}, \alpha, \mathscr{A})$ in the case of a left module, and $(\mathscr{M}, \beta, \mathscr{B})$ in the case of a right module. Thus, in the ternary algebra $(\mathscr{M}, \alpha, \mathscr{A})$, the multiplication is given by formula (24), and in the ternary algebra $(\mathscr{M}, \beta, \mathscr{B})$, by formula (28). The associativity conditions for these ternary multiplications are given in \eqref{associativity alpha} and \eqref{associativity beta}.

We now present important examples of ternary algebras constructed in the spirit of the ternary algebra $(\mathscr{M}, \alpha, \mathscr{A})$. These are ternary algebras of rectangular matrices, which have been studied in \cite{Carlsson}, \cite{Hestenes}. Let $\mathfrak{M}_{m,n}$ be the vector space of complex $m \times n$ matrices, and let $\mathfrak{M}_m$ be the algebra of complex square matrices of order $m$. It is clear that $\mathfrak{M}_{m,n}$ is a left module over the algebra $\mathfrak{M}_m$, where the left action of $\mathfrak{M}_m$ on $\mathfrak{M}_{m,n}$ is given by the standard matrix multiplication of an $m \times m$ matrix on the left with an $m \times n$ matrix. A ternary multiplication on $\mathfrak{M}_{m,n}$ is defined either by the $\mathfrak M_m$-valued bilinear form $\alpha(X, Y) = X Y^T$, or by the $\mathfrak M_m$-valued conjugate-linear form $\alpha^{\tt c}(X, Y) = X\, {Y}^\dagger$, where $X,Y\in \mathfrak{M}_{m,n}$ and $Y^\dagger=\overline{Y}^T$. In the right-hand sides of these formulas, matrix multiplication, matrix transposition, and complex conjugation are understood. Thus, on the vector space of rectangular matrices $\mathfrak{M}_{m,n}$, we have two ternary algebras, denoted $(\mathfrak{M}_{m,n}, \alpha, \mathfrak{M}_m)$ and $(\mathfrak{M}_{m,n}, \alpha^{\mathrm{c}}, \mathfrak{M}_m)$. In the first algebra, the multiplication is given by the formula
$$
X \cdot Y \cdot Z = X\, Y^T\, Z
$$
and is trilinear, while in the second algebra the multiplication is given by
$$
X \cdot Y \cdot Z = X\, Y^\dagger\, Z
$$
and is conjugate-linear in the second factor. Both ternary algebras are associative of the second kind, which is readily verified by checking conditions \eqref{associativity alpha} corresponding to second-kind associativity.

A second important example of ternary algebras of the form $(\mathscr{M}, \beta, \mathscr{B})$ is constructed using ternary multiplications of cubic matrices. By a cubic matrix, we mean a collection of complex numbers indexed by three natural numbers ranging from 1 to $n$. A cubic matrix will be denoted by $X = (X_{ijk})$, where the indices $i, j, k$ run from 1 to $n$. We refer to $X$ as a cubic matrix of order $n$ because it is often convenient to visualize it as a spatial lattice whose nodes are occupied by the complex numbers $X_{ijk}$. The theory of cubic matrices, and more generally, the theory of $n$-dimensional matrices, is not widely known. Fundamental structures of this theory, such as rank, trace, and determinant, can be found in \cite{Gelfand1}, \cite{Sokolov}. It is clear that the set of cubic matrices of order $n$ forms a vector space over the field of complex numbers under entrywise addition and scalar multiplication. We denote the vector space of cubic matrices of order $n$ by $\mathscr{C}_n$.

For cubic matrices, ternary multiplication is a more natural structure than binary multiplication. Ternary multiplications of cubic matrices, by analogy with the binary multiplication of flat square matrices, can be described as a three-stage composition of linear maps. Let $\mathbb{C}^n$ be the vector space of $n$-dimensional complex vectors, and let $\mathfrak{M}_n$ be the vector space of complex square matrices of order $n$. Then a cubic matrix $Z = (Z_{srk})$ of order $n$ defines a linear map which maps a vector ${a} = (a_k)$ to a matrix $A = (A_{sr})$ via the rule $a=(a_k) \mapsto A=(Z_{srk} \, a_k)$. The next cubic matrix $Y = (Y_{rsp})$ maps the square matrix $A = (A_{sr})$ to a vector $b=(b_p) = (Y_{rsp} A_{sr})$. Thus, at this stage, we obtain a linear map which maps a vector to a vector. To complete this chain and obtain a linear map which maps a vector to a square matrix, we require a third cubic matrix, which, analogously to the first step, maps a vector to a square matrix, i.e., $B=(B_{ij}) = (X_{ijp} \, b_p)$. This process can be represented as a sequence of transformations
\begin{equation}
{\mathbb C}^n\;\xrightarrow{\hspace{0.2cm} Z \hspace{0.2cm}}\;{\mathfrak M}_n\;\xrightarrow{\hspace{0.2cm} Y \hspace{0.2cm}}\;{\mathbb C}^n
       \;\xrightarrow{\hspace{0.2cm} X \hspace{0.2cm}} {\mathfrak M}_n.
\label{sequence of mappings 1}
\end{equation}
Note that at the center of this sequence, that is, in the linear map which maps a matrix to a vector defined by the cubic matrix $Y$, we have an alternative form given by $b_p = Y_{rsp} A_{rs}$. Thus, we obtain the following ternary multiplications of cubic matrices
\begin{eqnarray}
&& (X\cdot Y\cdot Z)_{ijk}=X_{ijp}\,Y_{rsp}\,Z_{srk},\;\;\;(X\cdot Y\cdot Z)_{ijk}=X_{ijp}\,{Y}_{rsp}\,Z_{rsk},\label{cubic matrices product}\\
&& (X\cdot Y\cdot Z)_{ijk}=X_{ijp}\,\overline{Y}_{rsp}\,Z_{srk},\;\;\;(X\cdot Y\cdot Z)_{ijk}=X_{ijp}\,\overline{Y}_{rsp}\,Z_{rsk}\label{cubic matrices product conjugate}
\label{ternary maltiplications cubic}
\end{eqnarray}
where $\overline{Y}_{rsp}$ stands for complex-conjugate of $Y_{rsp}$. Obviously, the ternary products (\ref{cubic matrices product}) are $\mathbb C$-linear and the ternary products (\ref{cubic matrices product conjugate}) are their conjugate-linear counterparts. 
Now our goal is to show that the ternary products (\ref{cubic matrices product}),(\ref{cubic matrices product conjugate}) are associative of the second kind and define ternary algebras of the type \((\mathscr M,\beta,\mathscr B)\).

A cubic matrix can be conveniently represented as a collection of square matrices of order $n$. Indeed, by fixing one of the three indices in the cubic matrix $X = (X_{ijk})$, for example, the last index $k$, we obtain a square matrix of order $n$ whose entries are indexed by $i$ and $j$. Denote the resulting square matrix by $X_{(k)}$. The cubic matrix $X$ can then be identified with the ordered set of $n$ square matrices $(X_{(1)}, X_{(2)}, \ldots, X_{(n)})$, which we denote by $\vec{X}$. Thus, $\vec{X} = (X_{(1)}, X_{(2)}, \ldots, X_{(n)})$ can be viewed as the analogue of an $n$-dimensional vector whose coordinates are square matrices of order $n$. Analogously $\vec{X}^\dagger = (X^\dagger_{(1)}, X^\dagger_{(2)}, \ldots, X^\dagger_{(n)})$.
We will refer to a cubic matrix written in the form $\vec{X}$ as a matrix vector. It is worth noting that the vector space of cubic matrices $\mathscr{C}_n$ is a right module over the algebra of square matrices $\mathfrak{M}_n$. Indeed, given a matrix vector $\vec{X}$, the right action of a square matrix $A$ on $\vec{X}$ is denoted by $\vec{X} \triangleright A$ and is defined as the matrix vector whose $r$th coordinate is the square matrix $X_{(k)} A_{kr}$ ((with summation over $k$ understood)).

Let $Y,Z$ be cubic matrices, written in vector form as $\vec{Y},\vec{Z}$. We now construct a square matrix as follows: at the intersection of the $p$th row and the $k$th column of this matrix stands the trace of the product of the matrices $Y_{(p)}$ and $Z_{(k)}$. The resulting square matrix is denoted by $\text{Tr}(\vec{Y} \vec{Z})$. Thus, we have a bilinear mapping $\beta: \mathscr{C}_n \times \mathscr{C}_n \to \mathfrak{M}_n$, where $\beta(\vec{Y}, \vec{Z}) = \text{Tr}(\vec{Y} \vec{Z})$. Now the ternary multiplications in \eqref{ternary maltiplications cubic} can be written in the form
\begin{equation}
X\cdot Y\cdot Z=\vec{X}\triangleright \text{Tr}\,(\vec{Y}\vec{Z}),\;\;\;
     X\cdot Y\cdot Z=\vec{X}\triangleright \text{Tr}\,(\vec{Y}^\dagger\vec{Z}).
     \label{ternary multiplications cubic 1}
\end{equation}
On the right-hand side of these formulas, the second factor is a matrix. We can compute the trace of this matrix, obtaining a complex number. In this way, we obtain two additional ternary multiplications of cubic matrices, which take the form
\begin{equation}
X\cdot Y\cdot Z=\vec{X}\, \text{Tr}\big(\text{Tr}\,(\vec{Y}\vec{Z})\big),\;\;\;
     X\cdot Y\cdot Z=\vec{X}\,\text{Tr}\big(\text{Tr}\,(\vec{Y}^\dagger\vec{Z})\big).
\label{two additional products}
\end{equation}
In this case, the bilinear form $\beta(Y, Z) = \text{Tr}\big(\text{Tr}(\vec{Y} \vec{Z})\big)$ or $\beta(Y,Z)=\text{Tr}\big(\text{Tr}(\vec{Y}^\dagger \vec{Z})\big)$ is a $\mathbb{C}$-valued form, and in the right-hand sides of formulas \eqref{two additional products}, it is understood that the cubic matrix $X$ is multiplied by the corresponding scalar. Note that the first form is symmetric and bilinear, while the second is conjugate-linear in the first argument and linear in the second.

Having clarified the structure of the ternary products of cubic matrices, we can readily prove their second-kind associativity. We will do this for the first ternary multiplication given in formula \eqref{ternary multiplications cubic 1}; the proof for the second is analogous. To establish second-kind associativity, we show that the bilinear form $\beta$ satisfies conditions \eqref{associativity beta} for the case of second-kind associativity. Thus, we must verify that for arbitrary cubic matrices $\vec{X}, \vec{Y}, \vec{Z}, \vec{V}$, the following identities hold:
\begin{equation}
\beta(\vec{X}, \vec{Y} \triangleright \beta(\vec{Z}, \vec{V})) = \beta(\vec{X}, \vec{Y}) \, \beta(\vec{Z}, \vec{V}),
\label{beta condition fore cubic1}
\end{equation}
and
\begin{equation}
\beta(\vec{X} \triangleright \beta(\vec{Z}, \vec{V}), \vec{Y}) = \beta(\vec{V}, \vec{Z}) \, \beta(\vec{X}, \vec{Y}).
\label{beta condition for cubic 2}
\end{equation}
The first identity follows immediately from the fact that the trace is a linear function. Indeed, on the left hand side of \eqref{beta condition fore cubic1} in the second argument of the form $\beta$, we have the matrix vector $\vec{Y}$, which undergoes a linear transformation by the square matrix $\beta(\vec{Z}, \vec{V})$. By the linearity of the trace, the transformation matrix can be factored out to the right of the form $\beta$, preserving the order of matrix multiplication and thereby proving identity \eqref{beta condition fore cubic1}. The second identity follows from the linearity of the trace and the elementary identity $\text{Tr}(Z_{(i)} V_{(j)}) = \text{Tr}(V_{(j)} Z_{(i)})$. This identity implies that permuting the matrix vectors in $\text{Tr}(\vec{Z}, \vec{V})$ results in transposing the resulting matrix, that is,
$$
(\text{Tr}(\vec{Z}, \vec{V}))^T = \text{Tr}(\vec{V}, \vec{Z}).
$$
This means that, by factoring the linear transformation $\beta(\vec{Z}, \vec{V})$ out of the form $\beta$ to the left, thanks to the linearity of the trace, and simultaneously interchanging the matrix vectors $\vec{Z}, \vec{V}$, we obtain the matrix product on the right-hand side of identity \eqref{beta condition for cubic 2}. The proof of these properties for the forms $\beta$ appearing on the right-hand sides of formulas \eqref{two additional products} is even simpler, since in this case we are dealing with the multiplication of a cubic matrix by a complex scalar. Therefore, properties \eqref{beta condition fore cubic1} and \eqref{beta condition for cubic 2} follow directly from the linearity and conjugate-linearity of the forms.
\section{\texorpdfstring{Ternary $\omega$-Lie algebras of rectangular and cubic matrices}{Ternary Lie algebras of rectangular and cubic matrices}}
In this section, we study the structure of ternary $\omega$-Lie algebras whose ternary $\omega$-commutator is constructed by means of the associative ternary multiplications described in the previous section. We examine in detail the structure of the ternary $\omega$-Lie algebra of cubic matrices of the second order. We will also use the following classification of 2-dimensional ternary $\omega$-Lie algebras \cite{Abramov_2024}:
\begin{theorem}
If $\cal F$ is a two-dimensional ternary $\omega$-Lie algebra, then it is isomorphic to one of the four two-dimensional ternary $\omega$-Lie algebras given by their structure constants in the \mbox{following table:}
\begin{table}[H]
\begin{tabularx}{\textwidth}{CCCCC}
\toprule
\textbf{{Number}} 
	& \textbf{$C^1_{121}$}	& \textbf{$C^2_{121}$} & \textbf{$C^1_{212}$}  & \textbf{$C^2_{212}$}\\
\midrule
{{I}}                  & 0		                          & 0        &     0              &     0             \\

{{II}}	                & 0			                       & 1        &     1              &     0              \\

{{III}}	            & 0		                       & 1        &   0                &     0              \\

{{IV}}	            & 1		                       & 0        &   0                &     $-$1              \\
\bottomrule
\end{tabularx}
\end{table}
\label{theorem classification}
\end{theorem}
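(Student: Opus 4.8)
The plan is to parametrize all admissible structure constants and then reduce them to normal form under change of basis. First I would use the $\omega$-symmetry of the structure constants, $C^m_{ijk}=\omega\,C^m_{jki}=\overline{\omega}\,C^m_{kij}$, which for $n=2$ splits the lower index triples into the cyclic orbits $\{(1,1,1)\}$, $\{(2,2,2)\}$, $\{(1,2,1),(2,1,1),(1,1,2)\}$ and $\{(2,1,2),(1,2,2),(2,2,1)\}$. The two all-equal orbits force $C^m_{111}=C^m_{222}=0$, since $\omega\neq 1$, while on each mixed orbit every constant equals $\omega$ or $\overline{\omega}$ times a chosen representative. Hence the whole bracket is encoded by the four complex numbers $C^1_{121},C^2_{121},C^1_{212},C^2_{212}$ appearing in the table, or equivalently by the two vectors $[e_1,e_2,e_1]$ and $[e_2,e_1,e_2]$; this already explains why the table has exactly four columns.

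Next I would impose the $GA(1,5)$-system \eqref{identity for structure constants}. For $n=2$ the five permuted indices and the free index $p$ all range over $\{1,2\}$; after rewriting every structure constant in terms of the four representatives via the $\omega$-symmetry and collecting the cyclic sum $\circlearrowleft$, the identity collapses to a small finite system of homogeneous quadratic equations in $C^1_{121},C^2_{121},C^1_{212},C^2_{212}$. Solving this system carves out the variety of admissible four-tuples. I expect this variety to decompose into a few components governed by the vanishing pattern and linear dependence of the two vectors $[e_1,e_2,e_1]$ and $[e_2,e_1,e_2]$, which is precisely the data that will distinguish the rows of the table.

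Finally I would quotient by isomorphism. Two such algebras are isomorphic exactly when their structure constants are related by the tensor law \eqref{tensor law} for some $A\in GL(2,\mathbb{C})$. I would compute how the four representatives transform under a general $A$, keeping in mind that when the permuted constants $C^q_{prs}$ are re-expressed in terms of the representatives they pick up $\omega$- and $\overline{\omega}$-weights, so the induced action on the tuple $(C^1_{121},C^2_{121},C^1_{212},C^2_{212})$ is an $\omega$-twisted version of the naive tensor action on a $2\times 2$ matrix. Using scalings and mixings of $e_1,e_2$ I would then normalize leading entries to $0$ or $1$ and clear the rest, obtaining exactly the four representatives I--IV; the trivial (abelian) algebra I is separated off first, and the remaining orbits are distinguished according to whether the two defining vectors are independent, proportional, or one of them vanishes.

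The main obstacle is the interplay of the two computations. One must carry out the $GA(1,5)$-reduction without losing admissible tuples, which requires careful bookkeeping of the $\omega,\overline{\omega}$ factors and of the cyclic sum $\circlearrowleft$ in Definition \ref{Def of ternary Lie algebra}. The more delicate half is the normal-form analysis under the $\omega$-twisted $GL(2,\mathbb{C})$ action: I must show both that every admissible tuple can be brought to one of the four listed forms and that no two of the forms are isomorphic to one another. The cleanest way to settle the second point is to exhibit a complete set of $GL(2,\mathbb{C})$-invariants of the admissible tuples and verify that they take pairwise distinct values on rows I--IV, thereby confirming that the four cases are genuinely inequivalent.
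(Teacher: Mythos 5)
The first thing to note: the paper you are being checked against does not actually prove this theorem — it imports it verbatim from the earlier work \cite{Abramov_2024} (``We will also use the following classification of 2-dimensional ternary $\omega$-Lie algebras''), so there is no internal proof to compare with. Judged on its own merits, your opening step is correct and complete: since $\omega\neq 1$, the $\omega$-symmetry forces $C^m_{111}=C^m_{222}=0$ and collapses each of the two mixed cyclic orbits $\{(1,2,1),(2,1,1),(1,1,2)\}$ and $\{(2,1,2),(1,2,2),(2,2,1)\}$ to a single representative, so the bracket is indeed encoded by $C^1_{121},C^2_{121},C^1_{212},C^2_{212}$, i.e.\ by the two vectors $[e_1,e_2,e_1]$ and $[e_2,e_1,e_2]$. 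Your overall strategy — impose the $GA(1,5)$-system, then normalize under the tensor law — is the natural route, and it is essentially the one the cited source follows.

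However, as a proof your text has a genuine gap: everything after the parametrization is a plan rather than an argument. You never write down the quadratic system that the $GA(1,5)$-identity \eqref{identity for structure constants} imposes on the four parameters (for $n=2$ this is a concrete finite computation: five underlined lower indices and one free upper index give $2^6$ equations, each a cyclic sum of twenty products, which after the $\omega$-reduction must be shown to collapse to the ``small system'' you merely predict), you never solve it, and you never carry out the orbit analysis. In particular the two substantive claims — (a) every admissible tuple can be moved by some $A\in GL(2,\mathbb{C})$ to one of rows I--IV, and (b) rows I--IV are pairwise non-isomorphic — are precisely the ones you postpone; note that (b) is not obvious for types II and IV, since both have two nonzero defining vectors, so the invariants you allude to must actually be constructed, not just promised. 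Finally, Definition \ref{Def of ternary Lie algebra} also admits $\mathbb{R}$-linear brackets, for which the paper explicitly restricts the tensor law \eqref{tensor law} to real basis changes; your normalization uses $GL(2,\mathbb{C})$ throughout, so you must either state that the classification concerns $\mathbb{C}$-linear brackets or redo the orbit analysis over the reals, where the equivalence classes could a priori be finer.
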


We begin with the associative ternary algebra of rectangular matrices $\mathfrak{M}_{m,n}$. The vector space $\mathfrak{M}_{m,n}$ is a module over the algebra of square matrices $\mathfrak{M}_m$ of order $m$, and the associative ternary multiplication is constructed either via the bilinear form $\alpha: \mathfrak{M}_{m,n} \times \mathfrak{M}_{m,n} \to \mathfrak{M}_m$, or via the conjugate-linear form $h: \mathfrak{M}_{m,n} \times \mathfrak{M}_{m,n} \to \mathfrak{M}_m$, where
$$
\alpha(X, Y) = X Y^T, \quad h(X, Y) = X Y^\dagger.
$$
The corresponding ternary multiplications on rectangular matrices take the form
$$
X \cdot Y \cdot Z = \alpha(X, Y)\, Z = X Y^T Z, \quad X \cdot Y \cdot Z = h(X, Y)\, Z = X Y^\dagger Z.
$$
Consider the special case of the associative ternary algebra $\mathfrak{M}_{m,n}$ with $m = 1$. In this case, the elements of the algebra are row matrices, and the vector space of row matrices can be identified with the vector space of complex $n$-dimensional vectors $\mathbb{C}^n$. The bilinear form $\alpha$ then takes the form 
\begin{equation}
\alpha(X, Y) =\sum_i X_i Y_i,
\label{bilinear-form-alpha}
\end{equation}
while the conjugate-linear form $h$ becomes the Hermitian inner product of $n$-dimensional complex vectors, that is,
$$
h(X, Y) =\sum_i X_i \overline{Y}_i,
$$
with $X = (X_1, X_2, \ldots, X_n)$, $Y = (Y_1, Y_2, \ldots, Y_n)$. Henceforth, the associative ternary algebra of $n$-dimensional vectors with ternary multiplication defined via $\alpha$ will be denoted $(\mathbb{C}^n, \alpha)$, and the ternary algebra defined via the Hermitian inner product will be denoted $(\mathbb{C}^n, h)$. The associative ternary algebra $(\mathbb{C}^n, h)$ can be employed in quantum theory, where it provides a framework for treating state vectors and operators as indistinguishable objects. In this setting, ternary multiplication replaces the action of an operator on a state vector by a linear combination of state triplets combined through the ternary product (see the discussion following formula \eqref{general ternary multiplication} and \cite{Bruce}). Following the terminology introduced in \cite{Bruce}, we will refer to the associative ternary algebras $(\mathbb{C}^n, \alpha)$ and $(\mathbb{C}^n, h)$ as vector ternary algebras. 

The vector ternary algebra $(\mathbb C^n,\alpha)$ is left-commutative. Therefore, to construct a ternary $\omega$-Lie algebra we will use a ``reduced'' form of the ternary commutator
\begin{equation}
[X,Y,Z]=\alpha(Z,X)\,Y+\omega\;\alpha(X,Y)\,Z+\overline\omega\;\alpha(Y,Z)\,X,
\label{reduced-commutator-vectors}
\end{equation}
where $X,Y,Z\in {\mathbb C}^n$. If we assume that $\alpha$ is an arbitrary nondegenerate symmetric bilinear form, then we do not obtain a broader class 
of ternary $\omega$-Lie algebras. Indeed, in this case, the ternary bracket (\ref{reduced-commutator-vectors}) is $\mathbb C$-linear and we can use linear complex transformations of the basis of the space $\mathbb C^n$ and by such a transformation a nondegenerate symmetric bilinear form can be brought into the form (\ref{bilinear-form-alpha}). Thus we have only one ternary $\omega$-Lie algebra of this type, which will be referred to as a vector ternary $\omega$-Lie algebra. The structure constants of the ternary $\omega$-Lie algebra take the simplest form in 
the canonical basis $e_1,e_2,\ldots,e_n$, where the $i$-th coordinate of the $n$-dimensional vector $e_i$ is equal to $1$ and all others are $0$. In this case we have 
$\alpha(e_i,e_j)=\delta_{ij}$ and 
\begin{equation}
C^m_{ijk}=\delta_{ik}\delta^m_j+\omega\;\delta_{ij}\delta^m_k+\overline\omega\;\delta_{jk}\delta^m_i.
\end{equation}
\begin{proposition}
For any pair of distinct indices $1\leq i,j\leq n$ the subspace spanned by the vectors $e_i,e_j$ is a subalgebra of the vector ternary $\omega$-Lie algebra and this subalgebra is isomorphic to the II-type algebra in the classification given in Theorem \ref{theorem classification}, that is, the ternary commutation relations of this subalgebra have the form
$$
[e_i,e_j,e_i]=e_j,\;\;[e_j,e_i,e_j]=e_i.
$$
Moreover, for any triple of distinct indices $1\leq i,j,k\leq n$ we have only trivial ternary relations, that is, $[e_i,e_j,e_k]=0$. For any sequence of $r$ integers $1\leq i_1<i_2<\ldots<i_r\leq n$ the subspace spanned by 
the vectors $e_{i_1},e_{i_2},\ldots,e_{i_r}$ is a subalgebra of the vector ternary $\omega$-Lie algebra. 
\end{proposition}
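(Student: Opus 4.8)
The plan is to reduce every assertion to a single explicit evaluation of the ternary bracket on the canonical basis. Using either the reduced commutator \eqref{reduced-commutator-vectors} together with $\alpha(e_i,e_j)=\delta_{ij}$, or equivalently the structure-constant formula $C^m_{ijk}=\delta_{ik}\delta^m_j+\omega\,\delta_{ij}\delta^m_k+\overline\omega\,\delta_{jk}\delta^m_i$, I would first record the general identity
\[
[e_a,e_b,e_c]=\delta_{ac}\,e_b+\omega\,\delta_{ab}\,e_c+\overline\omega\,\delta_{bc}\,e_a,
\]
valid for all indices $a,b,c$. The decisive structural observation is that the right-hand side is a linear combination of $e_a,e_b,e_c$ only: the bracket of three basis vectors never produces a basis vector lying outside the set on which it is evaluated. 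Essentially all three parts of the proposition are corollaries of this one fact.

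From this identity the subalgebra claims are immediate. For any index set $\{i_1,\dots,i_r\}$ and any $a,b,c$ drawn from it, each of the three terms above is a scalar multiple of one of $e_a,e_b,e_c$, all of which lie in the span of $e_{i_1},\dots,e_{i_r}$; hence that span is closed under the bracket and is a subalgebra. In particular this covers the two-dimensional case $r=2$. For a triple of pairwise distinct indices $i,j,k$ every Kronecker delta in the identity vanishes, so $[e_i,e_j,e_k]=0$, which is the claimed triviality.

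It remains to identify the subalgebra spanned by $e_i,e_j$ (with $i\neq j$) as the type-II algebra of Theorem~\ref{theorem classification}. Relabel $i\mapsto 1$, $j\mapsto 2$. Specializing the identity gives $[e_1,e_2,e_1]=e_2$ and $[e_2,e_1,e_2]=e_1$, that is $C^2_{121}=1$, $C^1_{121}=0$, $C^1_{212}=1$, $C^2_{212}=0$, which is exactly row II of the table. To conclude that these four numbers determine the whole algebra, I would invoke the $\omega$-symmetry $C^m_{abc}=\omega\,C^m_{bca}=\overline\omega\,C^m_{cab}$: modulo cyclic permutation of the lower indices, the only nonconstant triples in $\{1,2\}$ are represented by $(1,2,1)$ and $(2,1,2)$, while the constant triples give $[e_a,e_a,e_a]=0$; thus $C^m_{121}$ and $C^m_{212}$ carry all the structure. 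The relabeling map is therefore an isomorphism onto the type-II algebra.

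No step presents a genuine difficulty; the computation is forced once the general bracket identity is written down. The only point requiring a little care is the last one: one must check that the two displayed relations, propagated by $\omega$-symmetry, are consistent and actually exhaust the structure constants, rather than merely agreeing with two entries of the table. This is handled by the cyclic-class count above, and I would cross-check consistency by verifying directly that the remaining nonzero brackets produced by the identity, namely $[e_1,e_1,e_2]=\omega\,e_2$ and $[e_2,e_1,e_1]=\overline\omega\,e_2$, are precisely the images of $[e_1,e_2,e_1]=e_2$ under the cyclic relations $[u,v,w]=\omega\,[v,w,u]=\overline\omega\,[w,u,v]$.
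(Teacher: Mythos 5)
Your proposal is correct and follows essentially the same route as the paper: the paper derives the structure-constant formula $C^m_{ijk}=\delta_{ik}\delta^m_j+\omega\,\delta_{ij}\delta^m_k+\overline\omega\,\delta_{jk}\delta^m_i$ from the reduced commutator \eqref{reduced-commutator-vectors} with $\alpha(e_i,e_j)=\delta_{ij}$, and the proposition is then read off exactly as you do — closure of any coordinate-subspace span, vanishing of brackets on three distinct indices, and identification of the two-index case with row II of the classification table. Your additional consistency check via $\omega$-symmetry (e.g.\ $[e_1,e_1,e_2]=\omega\,e_2$ being the cyclic image of $[e_1,e_2,e_1]=e_2$) is a sound refinement of the same computation, not a different approach.
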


Associative ternary algebras of cubic matrices with ternary multiplications (\ref{cubic matrices product}), (\ref{cubic matrices product conjugate}) provide a possibility for constructing a wide class of ternary $\omega$-Lie algebras.
In this section, we study the structure of the ternary $\omega$-Lie algebra of cubic matrices of the second order. As an associative ternary multiplication of cubic 
matrices we will use the ternary multiplication containing the trace of a square matrix, namely
\begin{equation}
(X\cdot Y\cdot Z)_{ijk}=X_{ijp}\,Y_{rsp}\,Z_{srk}=\vec{X}\triangleright \mbox{Tr}(\vec{Y}\vec{Z}).
\end{equation}
Fixing one of the indices of the cubic matrix $X=(X_{ijk})$, for example $k$, we obtain a quantity with two indices, which we shall consider as a square
matrix. Computing the trace of this square matrix for different values of the index $k$, we obtain the vector $(X_{111}+X_{221},X_{112}+X_{222})$. We shall call this vector 
the trace of the cubic matrix $X$ with respect to the first two indices and denote it by $\mbox{Tr}_{1,2}(X)$. Similarly, the trace of a cubic matrix with respect to the first and third indices
is denoted by $\mbox{Tr}_{1,3}(X)$, and with respect to the second and third indices by $\mbox{Tr}_{2,3}(X)$. From the structure of the ternary multiplication of cubic matrices, the following statement follows:
\begin{proposition}
If the trace of each of the three cubic matrices $X,Y,Z$ of order $r$ with respect to the first two indices is zero, that is, $\mbox{Tr}_{1,2}(X)=\mbox{Tr}_{1,2}(Y)=\mbox{Tr}_{1,2}(Z)=0$, then 
the trace of the ternary $\omega$-commutator with respect to the first two indices is also zero, that is, $\mbox{Tr}_{1,2}([X,Y,Z])=0$. Thus, the cubic matrices of order $r$
having zero trace with respect to the first two indices form a subalgebra of the ternary $\omega$-Lie algebra of all cubic matrices of order $r$.
\end{proposition}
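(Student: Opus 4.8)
The plan is to reduce everything to a single observation about how the trace $\mbox{Tr}_{1,2}$ interacts with the ternary product: the quantity $\mbox{Tr}_{1,2}(A\cdot B\cdot C)$ depends on the left factor $A$ only through $\mbox{Tr}_{1,2}(A)$, and in particular vanishes whenever $\mbox{Tr}_{1,2}(A)=0$, irrespective of $B$ and $C$. Granting this lemma, the proposition follows at once, because the ternary $\omega$-commutator \eqref{omega-bracket} is a linear combination of six ternary products, and in each of them the left (first) factor is one of $X$, $Y$, $Z$; since all three have vanishing trace with respect to the first two indices, every one of the six terms contributes zero to $\mbox{Tr}_{1,2}([X,Y,Z])$.

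To establish the lemma, I would compute directly from the componentwise formula $(A\cdot B\cdot C)_{ijk}=A_{ijp}\,B_{rsp}\,C_{srk}$. Setting $i=j$ and summing over $i$ gives
$$
\big(\mbox{Tr}_{1,2}(A\cdot B\cdot C)\big)_k=\sum_{i,p,r,s}A_{iip}\,B_{rsp}\,C_{srk}
=\sum_{p,r,s}\big(\mbox{Tr}_{1,2}(A)\big)_p\,B_{rsp}\,C_{srk},
$$
where the last equality uses $\sum_i A_{iip}=(\mbox{Tr}_{1,2}(A))_p$. The same conclusion is visible in the matrix-vector language: since $A\cdot B\cdot C=\vec A\triangleright\mbox{Tr}(\vec B\vec C)$ and the right action $\vec A\triangleright M$ has components $A_{ijp}M_{pk}$, tracing over the first two indices pulls $\mbox{Tr}_{1,2}$ through the linear right action and leaves $(\mbox{Tr}_{1,2}(A))_p\,M_{pk}$. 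Either way, $\mbox{Tr}_{1,2}(A)=0$ forces $\mbox{Tr}_{1,2}(A\cdot B\cdot C)=0$.

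Next I would match the six summands of \eqref{omega-bracket} (with $s=X$, $u=Y$, $v=Z$) against the lemma: the products $X\cdot Y\cdot Z$ and $X\cdot Z\cdot Y$ have first factor $X$; the products $Y\cdot Z\cdot X$ and $Y\cdot X\cdot Z$ have first factor $Y$; and $Z\cdot X\cdot Y$ and $Z\cdot Y\cdot X$ have first factor $Z$. By the hypothesis $\mbox{Tr}_{1,2}(X)=\mbox{Tr}_{1,2}(Y)=\mbox{Tr}_{1,2}(Z)=0$, the lemma kills each summand under $\mbox{Tr}_{1,2}$. Since $\mbox{Tr}_{1,2}$ is a linear map on $\mathscr{C}_r$, it commutes with the formation of the linear combination, so $\mbox{Tr}_{1,2}([X,Y,Z])=0$. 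Finally, as $\mbox{Tr}_{1,2}$ is linear, its kernel is a linear subspace of $\mathscr{C}_r$, and the closure just proved shows this subspace is closed under the ternary $\omega$-commutator, i.e.\ it is a subalgebra.

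There is really no serious obstacle here; the whole argument hinges on the single structural feature that $\mbox{Tr}_{1,2}$ only sees the leftmost factor of a ternary product. The one point deserving a moment's care is the bookkeeping that every term of the $\omega$-commutator is a genuine permutation of $(X,Y,Z)$, so that its leading entry is always one of the three given matrices and never a more complicated combination; once this is noted, no cancellation among the six terms is needed, since each vanishes on its own.
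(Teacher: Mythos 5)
Your proposal is correct and follows essentially the same route as the paper: the paper's proof likewise rests on the observation that a ternary product of cubic matrices ``inherits its first two indices from the first factor'' (your lemma, which you usefully make precise via the index computation), combined with the fact that every term of the $\omega$-commutator has one of $X,Y,Z$ in the first position. Your version merely spells out the contraction $\sum_i A_{iip}\,B_{rsp}\,C_{srk}=(\mathrm{Tr}_{1,2}(A))_p\,B_{rsp}\,C_{srk}$ explicitly, which the paper leaves implicit.
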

A cubic matrix $X\cdot Y\cdot Z$, which is the product of three cubic matrices $X,Y,Z$, inherits the first two indices from the first matrix $X$. The ternary 
$\omega$-commutator is a linear combination of ternary products of cubic matrices, where the matrices $X,Y,Z$ successively occupy the first position. Since each of the cubic matrices 
has zero trace with respect to the first two indices, we arrive at the statement of the proposition.
\begin{proposition}
If the trace of each of the three cubic matrices $X,Y,Z$ of order $r$ with respect to every pair of indices is zero, that is, $\mbox{Tr}_{1,2}(X)=\mbox{Tr}_{1,3}(X)=\mbox{Tr}_{2,3}(X)=0$
and the same conditions hold for the cubic matrices $Y,Z$, then the trace of the ternary $\omega$-commutator $[X,Y,Z]$ with respect to any pair of indices is also zero. Thus, 
the cubic matrices with trace equal to zero with respect to any pair of indices form a subalgebra of the ternary $\omega$-Lie algebra of cubic matrices with zero trace with respect to the first two indices.
\end{proposition}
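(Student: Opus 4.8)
The plan is to reduce the statement to the three partial-trace operators and to treat the $\mbox{Tr}_{1,2}$ direction separately from the two others. Since the preceding proposition already shows that the cubic matrices with $\mbox{Tr}_{1,2}=0$ form a subalgebra, it suffices to prove that if $X,Y,Z$ have \emph{all three} partial traces zero, then $[X,Y,Z]$ again satisfies $\mbox{Tr}_{1,2}=\mbox{Tr}_{1,3}=\mbox{Tr}_{2,3}=0$. The $\mbox{Tr}_{1,2}$ part I would dispatch exactly as in the previous proposition: each of the six ternary products in the expansion of the $\omega$-commutator \eqref{omega-bracket} inherits its first two indices from whichever of $X,Y,Z$ occupies the first position, so $\mbox{Tr}_{1,2}$ of each product factors through $\mbox{Tr}_{1,2}$ of its leading factor, which vanishes by hypothesis.

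All the work is then in the $\mbox{Tr}_{1,3}$ and $\mbox{Tr}_{2,3}$ directions, which behave identically, so I would do $\mbox{Tr}_{1,3}([X,Y,Z])=0$ in detail and obtain $\mbox{Tr}_{2,3}$ by the same argument. First I would record a reduction formula: writing $X\cdot Y\cdot Z=\vec{X}\triangleright\beta(\vec{Y},\vec{Z})$ and, fixing the middle index equal to $j$, forming the square matrix $A[j]$ with entries $(A[j])_{ap}=A_{ajp}$, one gets
\[
\mbox{Tr}_{1,3}(A\cdot B\cdot C)_j=\mbox{Tr}\bigl(A[j]\,\beta(\vec{B},\vec{C})\bigr),\qquad \mbox{Tr}\,A[j]=\mbox{Tr}_{1,3}(A)_j .
\]
Expanding the six terms of the $\omega$-commutator, grouping by the leading factor, and using the transpose relation $\beta(\vec{B},\vec{C})^{T}=\beta(\vec{C},\vec{B})$ together with the coefficient pattern $1,\omega,\overline{\omega}$, this rewrites $\mbox{Tr}_{1,3}([X,Y,Z])_j$ (with $\beta_{YZ}:=\beta(\vec{Y},\vec{Z})$, etc.) as
\[
\mbox{Tr}\bigl(X[j]\,(\beta_{YZ}+\omega\,\beta_{ZY})\bigr)+\mbox{Tr}\bigl(Y[j]\,(\omega\,\beta_{ZX}+\overline{\omega}\,\beta_{XZ})\bigr)+\mbox{Tr}\bigl(Z[j]\,(\overline{\omega}\,\beta_{XY}+\beta_{YX})\bigr),
\]
where the hypothesis $\mbox{Tr}_{1,3}=0$ says precisely that each slice $X[j],Y[j],Z[j]$ is a traceless square matrix.

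The hard part will be the vanishing of this last sum, and this is exactly where the full tracelessness hypothesis is indispensable: unlike $\mbox{Tr}_{1,2}$, the operator $\mbox{Tr}_{1,3}$ contracts a first-two index of the leading factor against the trailing module index, so it does \emph{not} factor through a single matrix, and the individual groups do not vanish on their own. (One sees that the cancellation cannot be purely formal: taking a single elementary cubic matrix in each slot already yields $\mbox{Tr}_{1,3}([X,Y,Z])=1+\omega\neq0$, so the tracelessness of all three matrices must genuinely be used.) One natural way to organize the computation is to split each coefficient matrix $\beta_{YZ}+\omega\,\beta_{ZY}=\beta_{YZ}+\omega\,\beta_{YZ}^{T}$ into its symmetric and antisymmetric parts, noting that a traceless slice annihilates the identity component; but the decisive cancellation mixes the three groups and uses the three tracelessness conditions of all three matrices simultaneously, and carrying it out for both $\mbox{Tr}_{1,3}$ and $\mbox{Tr}_{2,3}$ is the computational core of the argument. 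Once $\mbox{Tr}_{1,3}([X,Y,Z])=\mbox{Tr}_{2,3}([X,Y,Z])=0$ is established, together with $\mbox{Tr}_{1,2}([X,Y,Z])=0$ the commutator lies in the common kernel of the three partial traces; that kernel is therefore closed under the bracket and, sitting inside the $\mbox{Tr}_{1,2}=0$ subalgebra, is the asserted subalgebra.
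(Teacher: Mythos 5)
Your reduction is correct as far as it goes, and it is already more detailed than the paper's own justification (the paper offers only the one sentence that the claim ``follows from the structure of the ternary multiplication and of the $\omega$-commutator''). The formula $\mbox{Tr}_{1,3}(A\cdot B\cdot C)_j=\mbox{Tr}\bigl(A[j]\,\beta(\vec{B},\vec{C})\bigr)$, the grouping of the six terms of the commutator by leading factor, and the observation that $\mbox{Tr}_{1,3}$, unlike $\mbox{Tr}_{1,2}$, does not factor through a single slice are all right. But your proof stops exactly where it would have to begin: the ``decisive cancellation'' you defer as ``the computational core'' is never carried out, only asserted to exist. This is a genuine gap, and it is not a fillable one, because at the stated level of generality the cancellation does not exist. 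Let $E^{abc}$ denote the cubic matrix whose only nonzero entry is a $1$ in position $(a,b,c)$; any $E^{abc}$ with $a,b,c$ pairwise distinct satisfies all three tracelessness hypotheses. For $r\geq 4$ take $X=E^{132}$, $Y=E^{342}$, $Z=E^{431}$. From $(A\cdot B\cdot C)_{ijk}=A_{ijp}B_{rsp}C_{srk}$ one checks that five of the six products in the $\omega$-commutator vanish identically (in each case the required third index or leading index pair is absent), while $X\cdot Y\cdot Z=E^{131}$; hence $[X,Y,Z]=E^{131}$ and $\mbox{Tr}_{1,3}([X,Y,Z])_3=1\neq 0$. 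For $r=3$ the same phenomenon occurs with $X=Y=E^{132}$ and $Z=E^{311}-E^{322}$, which gives $[X,Y,Z]=(1+\overline{\omega})E^{131}\neq 0$. So no splitting into symmetric and antisymmetric parts, and no mixing of your three groups, can produce the vanishing: the proposition, read for arbitrary order $r$ as it is stated, is simply false in the $\mbox{Tr}_{1,3}$ and $\mbox{Tr}_{2,3}$ directions.

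What survives---and what the paper actually uses---is the case $r=2$, together with the $\mbox{Tr}_{1,2}$ direction for general $r$ (your first paragraph, which is the same argument as the paper's preceding proposition and is fine). For $r=2$ the fully traceless matrices form the two-dimensional space spanned by $F_1,F_2$, and closure is a finite computation: by trilinearity and $\omega$-symmetry it suffices to check a handful of basis brackets, e.g.\ $[F_1,F_2,F_1]=-8F_2$, $[F_2,F_1,F_2]=-8F_1$, $[F_1,F_1,F_2]=-8\omega F_2$, $[F_2,F_2,F_1]=-8\omega F_1$, all of which lie in the span; this is precisely what the paper's later theorem on $\langle G_1,G_2\rangle$ being a type-II subalgebra records. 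So the honest repair of your argument (and of the statement) is to prove the $\mbox{Tr}_{1,2}$ part structurally and then restrict to second-order matrices and verify closure on the basis, rather than to seek a general identity. Your own sanity check with elementary matrices was the right instinct; pushed one step further---choosing the elementary matrices fully traceless, which needs four distinct index values---it refutes the general claim rather than merely showing that the hypotheses are needed.
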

The proof of this statement follows from the structure of the ternary multiplication of cubic matrices and the structure of the ternary $\omega$-commutator. 

Let $\cubic$ be the ternary $\omega$-Lie algebra of cubic matrices of the second order. The dimension of the vector space of this algebra is $8$, that is, $\mbox{dim}\,\cubic=8$. 
The subalgebra of cubic matrices with zero trace with respect to the first two indices has dimension $6$. We denote this subalgebra by $\frak T_0$. Thus, $\frak T_0\subset\cubic$  
and $\mbox{dim}\,\frak T_0=6$. The subspace spanned by cubic matrices of the second order with zero trace with respect to any 
pair of indices is a subalgebra of $\frak T_0$. We denote this subalgebra by $\frak T_1$, and its dimension is $2$. Thus, we have a sequence of subalgebras $\frak T_1\subset \frak T_0\subset 
\cubic$, whose dimensions are respectively $2,6,8$. In the space of cubic matrices of the second order we choose a basis in such a way that the first two matrices 
$G_1,G_2$ generate the $2$-dimensional subalgebra of cubic matrices with zero trace with respect to any pair of indices $\frak T_1$. These matrices have the form 
$G_1=-\frac{i\sqrt{2}}{4}F_1,\; G_2=-\frac{i\sqrt{2}}{4}F_2$, where
\vskip.5cm

\begin{tikzpicture}[line join=round,line cap=round,>=stealth]

\newcommand{\drawcube}[9]{%
  \def\s{2.8}      
  \def\dx{1.1}     
  \def\dy{0.9}     

  \coordinate (A) at (0,0);           
  \coordinate (B) at (\s,0);          
  \coordinate (C) at (\s,\s);         
  \coordinate (D) at (0,\s);          

  \coordinate (A') at ($(A)+(\dx,\dy)$);
  \coordinate (B') at ($(B)+(\dx,\dy)$);
  \coordinate (C') at ($(C)+(\dx,\dy)$);
  \coordinate (D') at ($(D)+(\dx,\dy)$);

  \draw[thick] (A)--(B)--(C)--(D)--cycle;
  \draw[dashed] (A')--(B');
  \draw (B')--(C');
  \draw (C')--(D');
  \draw[dashed] (D')--(A');
  \draw[dashed] (A)--(A'); 
  \draw (B)--(B');
  \draw (C)--(C');
  \draw (D)--(D');

  \node[anchor=north east] at (D) {\large #1};            
  \node[anchor=south west, yshift=-11pt] at (C) {\large #2}; 
  \node[anchor=north east] at (A) {\large #3};            
  \node[anchor=north west] at (B) {\large #4};            
  \node[anchor=north east, yshift=14pt] at (D') {\large #5};
  \node[anchor=south west] at (C') {\large #6};
  \node[anchor=north east, yshift=14pt] at (A') {\large #7};
  \node[anchor=north west, yshift=14pt] at (B') {\large #8};
}

\begin{scope}[shift={(0,0)}]
  \drawcube{1}{0}{0}{$-1$}{0}{$-1$}{$-1$}{0}{}
  \node at (1.5,-0.8) {\Large $F_1$};
\end{scope}

\begin{scope}[shift={(6.2,0)}]
  \drawcube{0}{$-1$}{$-1$}{0}{$-1$}{0}{0}{1}{}
  \node at (1.5,-0.8) {\Large $F_2$};
\end{scope}
\end{tikzpicture}

\noindent
It should be noted that in the figure, the faces of the cubic matrix parallel to the plane of the page are ordered according to the third index. That is, the square matrix outlined with bold lines has the elements $(F_1)_{111}=1, (F_1)_{121}=0$ (first row), $(F_1)_{211}=0, (F_1)_{221}=-1$ (second row). The face parallel to it contains the elements of the cubic matrix with $2$ as the third index.

The following four cubic matrices $G_3,G_4,G_5,G_6$ complete the basis $G_1,G_2$ of the subspace $\frak T_1$ to a basis of the subspace $\frak T_0$. These cubic matrices have the form

\vskip.3cm
\begin{tikzpicture}[line join=round,line cap=round,>=stealth]

\newcommand{\drawcube}[9]{%
  \def\s{2.8}      
  \def\dx{1.1}     
  \def\dy{0.9}     

  \coordinate (A) at (0,0);           
  \coordinate (B) at (\s,0);          
  \coordinate (C) at (\s,\s);         
  \coordinate (D) at (0,\s);          

  \coordinate (A') at ($(A)+(\dx,\dy)$);
  \coordinate (B') at ($(B)+(\dx,\dy)$);
  \coordinate (C') at ($(C)+(\dx,\dy)$);
  \coordinate (D') at ($(D)+(\dx,\dy)$);

  \draw[thick] (A)--(B)--(C)--(D)--cycle;
  \draw[dashed] (A')--(B');
  \draw (B')--(C');
  \draw (C')--(D');
  \draw[dashed] (D')--(A');
  \draw[dashed] (A)--(A'); 
  \draw (B)--(B');
  \draw (C)--(C');
  \draw (D)--(D');

  \node[anchor=north east] at (D) {\large #1};            
  \node[anchor=south west, yshift=-11pt] at (C) {\large #2}; 
  \node[anchor=north east, yshift=7pt] at (A) {\large #3};            
  \node[anchor=north west] at (B) {\large #4};            
  \node[anchor=north east, yshift=14pt] at (D') {\large #5};
  \node[anchor=south west] at (C') {\large #6};
  \node[anchor=north east, yshift=14pt] at (A') {\large #7};
  \node[anchor=north west, yshift=14pt] at (B') {\large #8};
}

\begin{scope}[shift={(0,0)}]
  \drawcube{0}{$-i$}{$-\frac{i}{2}$}{0}{0}{0}{0}{0}{}
  \node at (1.5,-0.8) {\Large $G_3$};
\end{scope}

\begin{scope}[shift={(6.2,0)}]
  \drawcube{0}{$-1$}{$\frac{1}{2}$}{0}{0}{0}{0}{0}{}
  \node at (1.5,-0.8) {\Large $G_4$};
\end{scope}
\end{tikzpicture}

\begin{tikzpicture}[line join=round,line cap=round,>=stealth]

\newcommand{\drawcube}[9]{%
  \def\s{2.8}      
  \def\dx{1.1}     
  \def\dy{0.9}     

  \coordinate (A) at (0,0);           
  \coordinate (B) at (\s,0);          
  \coordinate (C) at (\s,\s);         
  \coordinate (D) at (0,\s);          

  \coordinate (A') at ($(A)+(\dx,\dy)$);
  \coordinate (B') at ($(B)+(\dx,\dy)$);
  \coordinate (C') at ($(C)+(\dx,\dy)$);
  \coordinate (D') at ($(D)+(\dx,\dy)$);

  \draw[thick] (A)--(B)--(C)--(D)--cycle;
  \draw[dashed] (A')--(B');
  \draw (B')--(C');
  \draw (C')--(D');
  \draw[dashed] (D')--(A');
  \draw[dashed] (A)--(A'); 
  \draw (B)--(B');
  \draw (C)--(C');
  \draw (D)--(D');

  \node[anchor=north east] at (D) {\large #1};            
  \node[anchor=south west, yshift=-11pt] at (C) {\large #2}; 
  \node[anchor=north east, yshift=7pt] at (A) {\large #3};            
  \node[anchor=north west] at (B) {\large #4};            
  \node[anchor=north east, yshift=14pt] at (D') {\large #5};
  \node[anchor=south west] at (C') {\large #6};
  \node[anchor=north east, yshift=14pt] at (A') {\large #7};
  \node[anchor=north west, yshift=14pt] at (B') {\large #8};
}

\begin{scope}[shift={(0,0)}]
  \drawcube{0}{0}{0}{0}{0}{$-i$}{$-\frac{i}{2}$}{0}{}
  \node at (1.5,-0.8) {\Large $G_5$};
\end{scope}

\begin{scope}[shift={(6.2,0)}]
  \drawcube{0}{0}{0}{0}{0}{$-1$}{$\frac{1}{2}$}{0}{}
  \node at (1.5,-0.8) {\Large $G_6$};
\end{scope}
\end{tikzpicture}

The last two cubic matrices $G_7, G_8$ complete the basis of the six-dimensional subspace $\frak T_0$ to a basis of the entire space of all cubic matrices of the second order $\cubic$.

\begin{tikzpicture}[line join=round,line cap=round,>=stealth]

\newcommand{\drawcube}[9]{%
  \def\s{2.8}      
  \def\dx{1.1}     
  \def\dy{0.9}     

  \coordinate (A) at (0,0);           
  \coordinate (B) at (\s,0);          
  \coordinate (C) at (\s,\s);         
  \coordinate (D) at (0,\s);          

  \coordinate (A') at ($(A)+(\dx,\dy)$);
  \coordinate (B') at ($(B)+(\dx,\dy)$);
  \coordinate (C') at ($(C)+(\dx,\dy)$);
  \coordinate (D') at ($(D)+(\dx,\dy)$);

  \draw[thick] (A)--(B)--(C)--(D)--cycle;
  \draw[dashed] (A')--(B');
  \draw (B')--(C');
  \draw (C')--(D');
  \draw[dashed] (D')--(A');
  \draw[dashed] (A)--(A'); 
  \draw (B)--(B');
  \draw (C)--(C');
  \draw (D)--(D');

  \node[anchor=north east] at (D) {\large #1};            
  \node[anchor=south west, yshift=-11pt] at (C) {\large #2}; 
  \node[anchor=north east, yshift=7pt] at (A) {\large #3};            
  \node[anchor=north west] at (B) {\large #4};            
  \node[anchor=north east, yshift=14pt] at (D') {\large #5};
  \node[anchor=south west] at (C') {\large #6};
  \node[anchor=north east, yshift=14pt] at (A') {\large #7};
  \node[anchor=north west, yshift=14pt] at (B') {\large #8};
}

\begin{scope}[shift={(0,0)}]
  \drawcube{1}{0}{0}{0}{0}{0}{0}{0}{}
  \node at (1.5,-0.8) {\Large $G_7$};
\end{scope}

\begin{scope}[shift={(6.2,0)}]
  \drawcube{0}{0}{0}{0}{0}{0}{0}{1}{}
  \node at (1.5,-0.8) {\Large $G_8$};
\end{scope}
\end{tikzpicture}

Let $G_{i_1},G_{i_2},\ldots,G_{i_k}$, where $1\leq i_1<i_2<\ldots<i_k\leq 8$, be an ordered set of cubic matrices.
The subspace of the space $\cubic$ spanned by the cubic matrices $G_{i_1},G_{i_2},\ldots,G_{i_k}$ will be denoted by $\langle G_{i_1},G_{i_2},\ldots,G_{i_k}\rangle$. The following theorem describes all 2-dimensional subalgebras of the ternary $\omega$-Lie algebra of cubic matrices of the second order.
\begin{theorem}
2-dimensional subspaces
$$
\langle G_3,G_6 \rangle,\;\;\langle G_3,G_8 \rangle,\;\;\langle G_4,G_5 \rangle,\;\;\langle G_4,G_8 \rangle,\;\;
     \langle G_5,G_7 \rangle,\;\;\langle G_6,G_7 \rangle,\;\;\langle G_7,G_8 \rangle
$$
are Abelian subalgebras of the ternary $\omega$-Lie algebra $\cubic.$ 2-dimensional subspaces
$$
\langle G_1,G_2 \rangle,\; \langle G_3,G_4\rangle,\;\langle G_3,G_5 \rangle,\;\langle G_3,G_7 \rangle,\;
    \langle G_4,G_6 \rangle,\;\langle G_4,G_7 \rangle,\;\langle G_5,G_6 \rangle,\;\langle G_5,G_8 \rangle,\;
    \langle G_6,G_8 \rangle
$$
are subalgebras of the ternary $\omega$-Lie algebra $\cubic$ isomorphic to the II-type algebra in the classification of 2-dimensional ternary $\omega$-Lie algebras. 
\end{theorem}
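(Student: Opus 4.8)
The plan is to verify both claims by directly evaluating the ternary $\omega$-commutator \eqref{omega-bracket} on the explicitly given basis $G_1,\dots,G_8$, using the matrix-vector form of the multiplication. Since the ternary product $(X\cdot Y\cdot Z)_{ijk}=X_{ijp}\,Y_{rsp}\,Z_{srk}$ is $\mathbb C$-linear, the commutator is $\mathbb C$-trilinear, so it suffices to compute it on basis triples. First I would record the computational engine: identifying a cubic matrix $X$ of order two with the pair of square matrices $X_{(1)},X_{(2)}$, the multiplication collapses to
$$
(X\cdot Y\cdot Z)_{(k)}=\sum_{p}X_{(p)}\,\mathrm{Tr}\big(Y_{(p)}Z_{(k)}\big),
$$
so every ternary product is governed by the scalars $\mathrm{Tr}(Y_{(p)}Z_{(k)})$. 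Reading the eight matrix pairs $(G_{i,(1)},G_{i,(2)})$ off the figures, I would tabulate the traces $\mathrm{Tr}(G_{a,(p)}G_{b,(k)})$; these few numbers feed into all six summands of the $\omega$-commutator, weighted by $1,\omega,\overline\omega,1,\overline\omega,\omega$.

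Next I would cut down the number of brackets to be computed. Among triples drawn from $\{G_a,G_b\}$, the constant triples vanish by $[u,u,u]=0$, and the $\omega$-symmetry \eqref{omega-symmetry} fixes the value on a triple (up to a power of $\omega$) by its cyclic class. Only two cyclic classes remain, with representatives $[G_a,G_b,G_a]$ and $[G_b,G_a,G_b]$, which are exactly the brackets encoded by $C^\bullet_{121}$ and $C^\bullet_{212}$ in Theorem \ref{theorem classification}. Closure of $\langle G_a,G_b\rangle$ is established once these two brackets land in $\mathrm{span}\{G_a,G_b\}$.

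Then comes the case analysis over the sixteen listed pairs. For the pairs in the first list I expect both independent brackets to vanish identically, so all structure constants are zero and the subspace is an Abelian subalgebra. For the pairs in the second list I expect $[G_a,G_b,G_a]=\lambda\,G_b$ and $[G_b,G_a,G_b]=\mu\,G_a$ with $\lambda,\mu\neq 0$ and with no diagonal components; the normalization built into the basis (the factors $-i$, $-\tfrac{i}{2}$, $-\tfrac{i\sqrt2}{4}$) should make $\lambda=\mu=1$, or else a diagonal rescaling $e_1=sG_a,\ e_2=tG_b$ with $s^2\lambda=t^2\mu=1$ brings the structure constants to $C^2_{121}=C^1_{212}=1$ and $C^1_{121}=C^2_{212}=0$. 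Comparing with the table of Theorem \ref{theorem classification} then identifies each such subalgebra as type II.

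The main obstacle is organizational rather than conceptual: the bookkeeping of sixteen pairs, each assembled from six ternary products, and the correct reading of the matrices $G_{i,(1)},G_{i,(2)}$ from the diagrams. The one genuinely delicate point is verifying the vanishing of the \emph{wrong} components in the type-II cases, namely that $[G_a,G_b,G_a]$ carries no $G_a$ part (so $C^1_{121}=0$) and $[G_b,G_a,G_b]$ no $G_b$ part (so $C^2_{212}=0$). This cannot be read from a single trace scalar; it requires computing the full brackets and checking that the $\omega$-weighted contributions cancel in precisely the right direction.
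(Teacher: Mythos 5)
Your proposal is correct and takes essentially the same route as the paper, which states this theorem without any written proof: the intended verification is exactly the direct computation you set up, namely the collapse of the cubic-matrix product to $(X\cdot Y\cdot Z)_{(k)}=\sum_p X_{(p)}\,\mathrm{Tr}\bigl(Y_{(p)}Z_{(k)}\bigr)$ (the paper's $\vec{X}\triangleright\mathrm{Tr}(\vec{Y}\vec{Z})$), the $\omega$-symmetry reduction of each pair to the two cyclic representatives $[G_a,G_b,G_a]$ and $[G_b,G_a,G_b]$, and identification with the constants $C^{\bullet}_{121}$, $C^{\bullet}_{212}$ of the classification table, with diagonal rescaling $s^2\lambda=t^2\mu=1$ handling any nonunit coefficients. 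Your plan leaves the sixteen mechanical evaluations unexecuted (including the genuinely delicate check that the type-II brackets carry no diagonal components), but the framework determines them completely, which is no less than the paper itself provides.
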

\begin{theorem}
The ternary $\omega$-Lie algebra ${\frak T}_0$ is a direct sum of two isomorphic three-dimensional subalgebras $\langle G_1,G_5,G_6 \rangle$ and $\langle G_2,G_3,G_4 \rangle$, that is, 
$$
{\frak T}_0=\langle G_2,G_3,G_4 \rangle\oplus \langle G_1,G_5,G_6 \rangle.
$$
The nontrivial ternary commutation relations of the algebra $\langle G_2,G_3,G_4 \rangle$ are of the form
\begin{eqnarray}
&& [G_2,G_3,G_2]=-\frac{1}{32}G_3+\frac{3i}{32}G_4,\;\;\;\;\;\;[G_3,G_2,G_3]=\frac{i}{4\sqrt{2}}G_4,\nonumber\\
&& [G_2,G_4,G_2]=\frac{3i}{32}G_3+\frac{9}{32}G_4,\;\;\;\;\;\;\;\;\;[G_4,G_2,G_4]=-\frac{3}{4\sqrt{2}}G_4,\nonumber\\
&& [G_3,G_4,G_3]=G_4,\;\;\qquad\qquad\quad\;\;\;\,\;\;\;[G_4,G_3,G_4]=G_3,\nonumber\\
&& [G_2,G_3,G_4]=\frac{i}{4\sqrt{2}}G_3-\frac{3\,\omega}{4\sqrt{2}}G_4,\;\;\,
                  [G_4,G_3,G_2]=\frac{i}{4\sqrt{2}}G_3-\frac{3\,\overline{\omega}}{4\sqrt{2}}G_4.\nonumber
\end{eqnarray}
\label{Theorem 2}
\end{theorem}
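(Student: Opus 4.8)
The plan is to turn the whole statement into $2\times 2$ matrix bookkeeping by using the face description of the product, and then to read off the structural claims from a grading. First I would record, from the figures, the two faces $X_{(1)},X_{(2)}$ of each basis element, so that $G_3,G_4$ live on the first face, $G_5,G_6$ on the second, and $G_1=-\tfrac{i\sqrt2}{4}F_1$, $G_2=-\tfrac{i\sqrt2}{4}F_2$ have faces built from $\pm\sigma_1,\pm\sigma_3$. The multiplication $(X\cdot Y\cdot Z)_{ijk}=X_{ijp}Y_{rsp}Z_{srk}$ rewrites as $(X\cdot Y\cdot Z)_{(k)}=\sum_p \text{Tr}\big(Y_{(p)}Z_{(k)}\big)X_{(p)}$, so that a ternary product is always a scalar recombination of the faces of its \emph{first} argument and each $\omega$-commutator \eqref{omega-bracket} reduces to a few traces of products of $2\times 2$ matrices.

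The crux is a grading. Every element of $\frak T_0$ has both faces traceless (this is the condition $\text{Tr}_{1,2}=0$), so each face splits into an off-diagonal part and a traceless-diagonal part; I label a homogeneous piece by a face $f\in\{1,2\}$ and a type $t\in\{O,D\}$. Because the trace of an off-diagonal matrix times a diagonal matrix vanishes while $\text{Tr}(OO)$ and $\text{Tr}(DD)$ survive, the face form shows that $(f_1,t_1)\cdot(f_2,t_2)\cdot(f_3,t_3)$ is nonzero only when $f_1=f_2$ and $t_2=t_3$, and then has type $(f_3,t_1)$. Thus both labels are additive mod $2$, i.e. $\frak T_0$ is $\mathbb Z_2\times\mathbb Z_2$-graded with the multiplication, hence the $\omega$-commutator (a fixed combination of the six orderings, all of equal total degree), homogeneous. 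Here $V_{(0,0)}=\langle G_3,G_4\rangle$ and $V_{(1,0)}=\langle G_5,G_6\rangle$ are two-dimensional, the pieces $V_{(1,1)},V_{(0,1)}$ are one-dimensional and are spanned by the $\sigma_3$-faces of $G_2$ and $G_1$, and consequently $\langle G_2,G_3,G_4\rangle=V_{(0,0)}\oplus V_{(1,1)}$ and $\langle G_1,G_5,G_6\rangle=V_{(1,0)}\oplus V_{(0,1)}$.

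From here the structural assertions are immediate. Composing the grading with $(a,b)\mapsto a+b$ gives a $\mathbb Z_2$-grading $\frak T_0=\frak T_0^{0}\oplus\frak T_0^{1}$ with $\frak T_0^{0}=\langle G_2,G_3,G_4\rangle$ and $\frak T_0^{1}=\langle G_1,G_5,G_6\rangle$; since $0+0+0=0$ and $1+1+1=1$ in $\mathbb Z_2$, the $\omega$-commutator of three elements of one summand lies in the same summand. This gives at once the vector-space decomposition $\frak T_0=\langle G_2,G_3,G_4\rangle\oplus\langle G_1,G_5,G_6\rangle$ and the fact that each summand is a subalgebra, with no case analysis; I would stress that mixed brackets need not vanish, so this is a direct sum of subalgebras, not of ideals. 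For the isomorphism I would introduce the face-swap $\Phi\colon X_{(1)}\leftrightarrow X_{(2)}$; a one-line index computation gives $\Phi(X\cdot Y\cdot Z)=\Phi X\cdot \Phi Y\cdot \Phi Z$, so $\Phi$ is a $\mathbb C$-linear automorphism of the ternary algebra and therefore of the $\omega$-commutator. As $\Phi$ flips the face label and fixes the type, it exchanges $\frak T_0^{0}$ and $\frak T_0^{1}$, giving the required isomorphism $\langle G_2,G_3,G_4\rangle\cong\langle G_1,G_5,G_6\rangle$.

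What is left is to produce the eight displayed relations of $\langle G_2,G_3,G_4\rangle$ with their exact coefficients. On the off-diagonal face-one generators the product collapses to $P\cdot Q\cdot R\mapsto \text{Tr}(QR)\,P$, which gives the clean relations $[G_3,G_4,G_3]=G_4$, $[G_4,G_3,G_4]=G_3$ directly; the brackets containing $G_2$ mix the two faces and bring in traces of $\sigma_1$ and $\sigma_3$ against the face-one matrices of $G_3,G_4$, together with $\text{Tr}(\sigma_3^2)$, and these are what produce the coefficients $-\tfrac1{32},\tfrac{3i}{32},\tfrac9{32},\pm\tfrac{3}{4\sqrt2}$ and the $\omega,\overline\omega$ appearing in $[G_2,G_3,G_4]$ and $[G_4,G_3,G_2]$. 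I expect the only real obstacle to be this bookkeeping: weighting the six ordered products by $1,\omega,\overline\omega$ and carrying the normalization $-\tfrac{i\sqrt2}{4}$ of $G_1,G_2$ correctly. There is no conceptual difficulty, since $\omega$-symmetry \eqref{omega-symmetry} reduces each triple to one ordering per cyclic class and the grading guarantees in advance that every such bracket lands in $\langle G_2,G_3,G_4\rangle$.
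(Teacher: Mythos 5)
Your structural argument is correct and, in fact, goes beyond what the paper provides: the paper states this theorem with no proof at all, the implicit justification being direct computation of all brackets of basis elements. Your face formula $(X\cdot Y\cdot Z)_{(k)}=\sum_p \mathrm{Tr}\bigl(Y_{(p)}Z_{(k)}\bigr)X_{(p)}$ is a correct rewriting of the multiplication; the $\mathbb{Z}_2\times\mathbb{Z}_2$-grading by face and by off-diagonal/diagonal type does hold, since a product of homogeneous elements is nonzero only when $f_1=f_2$ and $t_2=t_3$, in which case its degree is $(f_3,t_1)=(f_1+f_2+f_3,\,t_1+t_2+t_3)$ mod $2$; the coarse $\mathbb{Z}_2$-grading then yields the decomposition ${\frak T}_0=\langle G_2,G_3,G_4\rangle\oplus\langle G_1,G_5,G_6\rangle$ into two subalgebras with no case analysis, and the face swap $\Phi$ is indeed a $\mathbb{C}$-linear automorphism of the ternary multiplication, hence of the $\omega$-commutator \eqref{omega-bracket}, exchanging the two summands. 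I checked these steps, including that $G_3,G_4$ span the face-one off-diagonal piece so that $\langle G_2,G_3,G_4\rangle=V_{(0,0)}\oplus V_{(1,1)}$; this half of your proposal can stand as a proof of the first sentence of the theorem, and it is a genuinely better route than brute force.

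The gap is in the second half. Six of the eight displayed relations --- all those involving $G_2$, which carry essentially all the quantitative content of the theorem --- are never computed; you only predict that the bookkeeping will produce the stated coefficients. That prediction is not entirely borne out, which is exactly why the deferral matters. Carrying out your own plan with $c=\tfrac{i\sqrt2}{4}$, $G_2=(c\sigma_1,c\sigma_3)$, $A=G_{3(1)}$, $B=G_{4(1)}$, using $\mathrm{Tr}(\sigma_1B)=\mathrm{Tr}(B\sigma_1)=-\tfrac12$, $\mathrm{Tr}(B^2)=-1$, $\mathrm{Tr}(\sigma_3B)=0$ and the expansion $\sigma_1=\tfrac{3i}{2}A+\tfrac12 B$, one gets
\[
[G_4,G_2,G_4]=2\,G_4\cdot G_2\cdot G_4-G_2\cdot G_4\cdot G_4-G_4\cdot G_4\cdot G_2
=c\sigma_1-\tfrac{c}{2}B=\tfrac{3ic}{2}\,A=-\tfrac{3}{4\sqrt2}\,G_3,
\]
proportional to $G_3$, not to $G_4$ as printed in the statement (the $G_3\leftrightarrow G_4$ duality already visible in $[G_3,G_2,G_3]=\tfrac{i}{4\sqrt2}G_4$ points the same way, so the printed relation is almost certainly a misprint). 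The other five deferred relations do come out exactly as stated; I verified $[G_2,G_3,G_2]$, $[G_2,G_4,G_2]$, $[G_3,G_2,G_3]$, $[G_2,G_3,G_4]$ and $[G_4,G_3,G_2]$ with your formula. So to complete the proof you must actually perform these short trace computations, and in doing so you will have to flag and correct the $[G_4,G_2,G_4]$ entry: a proof that stops at ``the bookkeeping will work'' cannot certify a list of explicit structure constants, and here it would in fact certify one that is wrong as written.
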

It follows from Theorem 2 that the ternary $\omega$-Lie algebra of cubic matrices of the second order is the direct sum of four two-dimensional subalgebras, that is, 
$$
\cubic=\langle G_1,G_2\rangle\oplus\langle G_3,G_4\rangle\oplus\langle G_5,G_6\rangle\oplus\langle G_7,G_8\rangle,
$$
where the first three are isomorphic to the II-type algebra in the classification of 2-dimensional ternary $\omega$-Lie algebras and the fourth one is Abelian. 
\section{Discussion}
In this paper, we develop and investigate the structure of a ternary Lie algebra at cube roots of unity. 
A ternary Lie algebra at cube roots of unity, or more briefly, a ternary $\omega$-Lie algebra, 
can be regarded as a ternary extension of the concept of a Lie algebra. 
Our approach differs from those of Filippov and Nambu in that the proposed ternary bracket is not antisymmetric but possesses $\omega$-symmetry. 
As a consequence, the ternary bracket does not vanish when two of its three elements are equal. 
However, if all three elements are equal, the ternary bracket is equal to zero. 
The physical motivation for such an extension of the concept of a Lie algebra comes from the ternary generalization of the Pauli exclusion principle proposed by Kerner. 
This generalization states that in a quantum system consisting of several particles, 
three particles with identical quantum characteristics cannot coexist, 
whereas two such particles may coexist within the same quantum system.

In this paper, we also propose a method for constructing associative ternary multiplications, 
assuming that we are given a module over an algebra. 
The elements of this module are regarded as states of a quantum system. 
In general terms, this method consists in defining on a module an algebra-valued bilinear form 
and then constructing the ternary multiplication of states as follows: 
using the bilinear form, we construct from two states of the system an operator (an element of the algebra), 
and then act with this operator on the third state. 
In this way, the ternary multiplication provides a unified structure that combines both the operators 
and their action on the states of the system.

\section*{Acknowledgments}
{We are grateful to Sergei Silvestrov, Ying Ni, Lars Hellstr\"om and German Garcia for valuable discussions during the Workshop "Exploring the World of Mathematics III" held at M\"alardalen University from September 15 to 17, 2025. We also express our gratitude to Chengming Bai and Abdenacer Makhlouf for the fruitful discussion of our research 
and its relation to 3-Lie algebras during the conference 
``Workshop on algebras and applications in mathematical physics'' 
held in June 2025 at the Chern Institute of Mathematics in Tianjin.
}
\end{document}